\documentclass[12pt,reqno]{amsart}
\usepackage{amsmath, amsthm, amsopn, amssymb, microtype}

\usepackage{mathrsfs} 
\usepackage{mathscinet}
\usepackage{bbm}
\usepackage{enumerate, tikz, tikz-cd, etoolbox, intcalc, geometry, caption, subcaption, afterpage, enumitem}
\usepackage[english]{babel}
\usepackage{tkz-graph}
\usetikzlibrary{arrows}
\usepackage{caption}
\usepackage{bbm}
\title{Rationality and computability of the covering radius for sofic shifts}
\author{Tom Meyerovitch}

\author{Aidan Young}
\email{mtom@bgu.ac.il, aidanyoungmath@gmail.com}
\address{Ben Gurion University of the Negev.
	Departement of Mathematics.
	Be'er Sheva, 8410501, Israel
}

\usepackage[colorlinks=true,urlcolor=blue,pdfborder={0 0 0}]{hyperref}
\hypersetup{linkcolor=[rgb]{0,0,0.6}}
\hypersetup{citecolor=[rgb]{0,0.6,0}}
\usepackage[nameinlink]{cleveref}
\crefname{theorem}{Theorem}{Theorems}
\crefname{thm}{Theorem}{Theorems}
\crefname{mainthm}{Theorem}{Theorems}
\crefname{lemma}{Lemma}{Lemmas}
\crefname{lem}{Lemma}{Lemmas}
\crefname{remark}{Remark}{Remarks}
\crefname{prop}{Proposition}{Propositions}
\crefname{defn}{Definition}{Definitions}
\crefname{cor}{Corollary}{Corollaries}
\crefname{section}{Section}{Sections}
\crefname{figure}{Figure}{Figures}
\crefname{quest}{Question}{Questions}
\crefname{notation}{Notation}{Notations}
\crefname{conv}{Convention}{Conventions}
\crefname{example}{Example}{Examples}

\begin{document}

\theoremstyle{plain}
\newtheorem{thm}{Theorem}
\newtheorem{lemma}{Lemma}[section]
\newtheorem{prop}[lemma]{Proposition}
\newtheorem{cor}[lemma]{Corollary}
\newtheorem{claim}[lemma]{Claim}
\newtheorem{quest}[lemma]{Question}
\newtheorem{fact}[lemma]{Fact}

\renewcommand*{\thethm}{\Alph{thm}}

\theoremstyle{definition}
\newtheorem{definition}[lemma]{Definition}
\newtheorem{example}[lemma]{Example}
\newtheorem{remark}[lemma]{Remark}
\newtheorem{conj}[lemma]{Conjecture}
\newtheorem{notation}[lemma]{Notation}
\newtheorem{conv}[lemma]{Convention}

\newcommand{\N}{\mathbb{N}}
\newcommand{\Z}{\mathbb{Z}}
\newcommand{\Q}{\mathbb{Q}}
\newcommand{\R}{\mathbb{R}}
\newcommand{\C}{\mathbf{C}}
\newcommand{\cL}{\mathcal{L}}
\newcommand{{\cP}}{\mathcal{P}}
\newcommand{\Pna}{\mathcal{P}^\Omega}
\newcommand{\oP}{\overline{P}}
\newcommand{\cS}{\mathcal{S}}
\newcommand{\cT}{\mathcal{T}}  
\newcommand{\cV}[1]{\mathcal{V}_{#1}}
\newcommand{\cE}[1]{\mathcal{E}_{#1}}
\newcommand{\Aval}{\mathbb{V}_\mathit{alt}}
\newcommand{\NAval}{\mathbb{V}_\mathit{non-alt}}
\newcommand{\per}{\operatorname{per}}
\newcommand{\cW}{\mathcal{W}}
\newcommand{\NI}{\mathbf{NI}\;}
\newcommand{\cM}{\mathcal{M}}
\newcommand{\sW}{\mathscr{W}}
\newcommand{\cR}{\mathcal{R}}

\newcommand{\SV}{\operatorname{SV}}
\newcommand{\Var}{\operatorname{Var}}
\newcommand{\MaxEavg}{\operatorname{MaxAvg}}
\newcommand{\Prob}{\operatorname{Prob}}

\begin{abstract}
The covering radius of a shift space is a quantity of interest for  information-theoretic applications of data transmission over noisy channels. 
We prove that the covering radius of a primitive sofic shift is a rational number, and describe an algorithm to compute the covering radius from a labeled graph presentation. 
\end{abstract}

\maketitle
\section{Introduction}\label{sec:intro}

The covering radius of a code $\mathcal{C} \subseteq \{0,1\}^n$ is the smallest number $R>0$ such that for any $w \in \{0,1\}^n$, there exists $v \in \mathcal{C}$ that differs from $w$ in at most $R$ coordinates.  
We refer the reader to the book \cite{MR1453577} for an extensive account of covering radius of codes and a wide range of applications related to this notion.

The covering radius of a shift space $S \subseteq \{0,1\}^\Z$ is the natural analog for infinite sequences. We recall the formal definition in \Cref{subsec:covering_redius}. Covering radii of shift spaces have been studied in \cite{MR4751633} as a key parameter framework for implementing error-correction in constrained systems. 

In \cite{MR4751633}, the covering radius has been explicitly computed for some fundamental examples of sofic shifts, including run length-limited shifts with certain parameters. Sofic shifts, and run length-limited shifts in particular,  occur in the context of constrained coding. Sofic shifts (see \Cref{subsec:sofic_shifts} for a definition) constitute an important family that come up in real-word data transmission and storage applications. The computations of the covering radius in \cite{MR4751633} were preformed on a somewhat ad-hoc basis. In all the examples, the covering radius turned out to be a rational number. The following questions naturally occurred: Is the covering radius of a sofic shift always a rational number? Is there a general algorithm to compute the covering radius of a sofic shift?

Here we answer these questions in the fundamental case that the underlying sofic shift is primitive.

\begin{thm}\label{thm:covering_radius_rational}
Let $\mathcal{G}=(G,L)$ be a primitive labeled graph.
Then the covering radius $R(X_\mathcal{G})$ is a rational number.
\end{thm}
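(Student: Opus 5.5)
The plan is to recast the covering radius as a max--min optimization over words, and to observe that the inner minimization is governed by a min-plus (tropical) linear recursion. I assume the definition from \Cref{subsec:covering_redius} in the form $R(X_\mathcal{G})=\lim_{n\to\infty}\frac1n\max_{w\in\{0,1\}^n}\min_{v\in\cL_n(X_\mathcal{G})} d_H(w,v)$, or something equivalent up to $o(n)$ errors. Let $V$ be the vertex set of $G$. For each symbol $a\in\{0,1\}$ define a $V\times V$ matrix $A_a$ over $\Z_{\ge 0}\cup\{\infty\}$ by
\[
A_a[p,q]=\min\{\mathbbm{1}[L(e)\neq a] : e \text{ an edge from } p \text{ to } q\},
\]
with $A_a[p,q]=\infty$ if there is no such edge. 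Then $\min_{v}d_H(w,v)$ is the minimal entry of the min-plus product $A_{w_1}\otimes\cdots\otimes A_{w_n}$. Equivalently, start from the zero vector $x^{(0)}$ and iterate $x^{(k)}=x^{(k-1)}\otimes A_{w_k}$; then $x^{(k)}_q$ is the least Hamming cost of a labeled path of length $k$ ending at $q$.

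The key step is a boundedness lemma, and this is where primitivity enters. Choose $N$ such that $G$ has a path of length exactly $N$ between any two vertices. Then for $k\ge N$ every entry of $x^{(k)}$ is finite. Moreover,
\[
\max_q x^{(k)}_q-\min_q x^{(k)}_q\le N.
\]
To see this, take the optimal path ending at some vertex at time $k-N$ and extend it by a length-$N$ path to any target. This changes at most $N$ symbols relative to the optimum, and the optimum is nondecreasing in $k$ because all costs are nonnegative. Consequently the normalized vector $y^{(k)}=x^{(k)}-\min_q x^{(k)}_q$ ranges over the finite set $\{0,1,\dots,N\}^V$ once $k\ge N$.

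This yields a finite deterministic automaton $\mathcal{H}$. Its states are the normalized vectors, and reading symbol $a$ from state $y$ moves to the normalization of $y\otimes A_a$. The transition carries the integer weight $\min_q (y\otimes A_a)_q\in\{0,1\}$. For any word $w$, $\min_v d_H(w,v)$ equals the total weight along the run of $\mathcal{H}$ on $w$ (started from the state reached after a fixed prefix), up to an additive error of at most $N$. Since $w$ is arbitrary, $\max_w\min_v d_H(w,v)$ is, up to $O(1)$, the maximum weight of a length-$n$ path in the finite weighted graph $\mathcal{H}$ restricted to states reachable from the initial state. Dividing by $n$ and letting $n\to\infty$ gives the maximum cycle mean of $\mathcal{H}$ over reachable cycles. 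Any path decomposes into simple cycles plus a bounded remainder, so this limit is attained by a simple cycle. It is therefore a ratio of two integers, a total weight over a cycle length, which proves rationality; Karp's algorithm moreover computes it.

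The main obstacle is making the boundedness lemma airtight. The delicate points are handling infinite entries during the first $N$ steps, and using the exact-length-$N$ connectivity, which is where primitivity rather than mere irreducibility is needed. A second concern is checking that the paper's precise definition of $R$ agrees with this normalized limit, so that the additive $O(1)$ errors vanish in the limit.
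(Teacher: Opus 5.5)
Your proof is correct. It reaches rationality by a genuinely different route from the paper.

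\textbf{The paper's route.} The paper embeds the covering radius into the general game $V(G,H,P)$, with $H$ a one-vertex bouquet on $\Sigma$ and $P(e,a)=1$ iff $L(e)\neq a$. It then tracks, for each of Alice's walks $p$, the full min-plus matrix $M(p)$ of boundary-conditioned costs. To obtain a finite state space it normalizes all these matrices by the single common quantity $V_n$. That normalization stays bounded only after discarding the non-$<$-maximal matrices, so most of the work (Lemmas~\ref{lem:maximal_factorization}, \ref{lem:star_trop_conv} and \ref{lemma:non_imp_almost_optimal}) goes into showing that $\cM_n^\dagger - V_n$ lies in a bounded box; Lemma~\ref{lemma:non_imp_almost_optimal} also uses primitivity of $H$. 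The deterministic recursion of Lemma~\ref{lem:M_n_recursion} plus pigeonhole then gives $V_{n+k}=V_n+V^\dagger$, hence $V=V^\dagger/k$.

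\textbf{Your route.} You exploit that Bob's starting vertex is free, so only the row-minimum vector $x^{(k)}$ is needed. You normalize each vector by its own minimum, so the spread bound needs nothing beyond exact-length connectivity in $G$. Alice's maximization then moves out of the state and into a longest-path problem on a finite deterministic automaton, whose growth rate is a maximum cycle mean.

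\textbf{What each buys.} Your approach is more elementary and gives several things directly:
\begin{itemize}
\item existence of the limit, without citing it;
\item an explicit denominator bound, namely the length of a simple cycle of the automaton, at most $(N+1)^{|\cV{G}|}$;
\item via Karp's algorithm, a concrete procedure for Theorem~\ref{thm:covering_radius_compute}.
\end{itemize}
It also extends essentially verbatim to the general game: take the product of $H$ with the automaton, and the spread bound becomes $2N\|P\|$. This extension needs primitivity of $G$ only, not of $H$. What the paper's heavier machinery buys is the sets $\cM_n^\dagger$ of non-improvable walks. These are reused in Section~\ref{sec:non_improvable_paths} to prove soficity of $X_H^\dagger$ and $Y_{(H,G)}^\dagger$ and to obtain periodic optimal strategies.

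\textbf{Minor clean-ups.} The ``delicate points'' you flag are harmless.
\begin{itemize}
\item Since $x^{(0)}=\mathbf{0}$ allows every start vertex, and every vertex of an irreducible graph has an incoming edge, $x^{(k)}$ is finite for all $k$.
\item For $k<N$ the spread of $x^{(k)}$ is trivially at most $k$.
\item Hence the automaton can simply start at $\mathbf{0}$, and the run weight on $w$ equals $\min_v d_H(w,v)$ exactly, with no word-dependent start state and no $O(1)$ error.
\item Because $G$ is irreducible, $\mathcal{C}_n(X_\mathcal{G})$ is exactly the set of labels of length-$n$ walks, so your limit is precisely the paper's $R(X_\mathcal{G})$.
\item Nothing changes for a general finite alphabet $\Sigma$ in place of $\{0,1\}$.
\end{itemize}
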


\begin{thm}\label{thm:covering_radius_compute}
There exists an algorithm that receives as input a primitive labeled graph $\mathcal{G}$ and computes the covering radius $R(X_\mathcal{G})$ in finite time.
\end{thm}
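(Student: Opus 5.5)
The plan is to turn the computation of $R(X_\mathcal{G})$ into a maximum-mean-cycle problem on an explicitly constructible finite weighted graph. This proves \Cref{thm:covering_radius_rational} and \Cref{thm:covering_radius_compute} at the same stroke.

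\textbf{Step 1: min-plus reformulation.} I use the formulation (to be checked against the definition in \Cref{subsec:covering_redius}) that $R(X_\mathcal{G})=\lim_{n}\frac1n \max_{w\in\{0,1\}^n}\min_{v\in\mathcal{L}_n(X_\mathcal{G})} d_H(w,v)$. For each symbol $a$ define a matrix $A_a$ indexed by the vertices of $G$, with $A_a(p,q)=\min_{e:p\to q}\mathbbm{1}[L(e)\neq a]$, and $A_a(p,q)=\infty$ if there is no edge from $p$ to $q$. Then $\min_v d_H(w,v)=\min_{p,q}(A_{w_1}\otimes\cdots\otimes A_{w_n})(p,q)$ in the min-plus semiring. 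Equivalently, run the dynamic programme $D_0\equiv 0$ and $D_j(q)=\min_{p}\bigl(D_{j-1}(p)+A_{w_j}(p,q)\bigr)$; the quantity we want is $\min_q D_n(q)$.

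\textbf{Step 2: bounded spread.} This is where primitivity enters, and I expect it to be the main technical point. By primitivity there is $k$ such that any two vertices are joined by a path of length exactly $k$. Comparing a path that ends at the minimizing vertex $k$ steps earlier and is then rerouted to $q$, with rerouting cost at most $k$, shows that for $j\ge k$:
\[
0\le D_j(q)-\min_{p} D_j(p)\le 2k .
\]
The same reasoning should give that all entries of $D_j$ are finite for $j\ge k$. Hence the normalized vector $\bar D_j=D_j-\min D_j$ takes values in the finite set $\{0,\dots,2k\}^{V(G)}$.

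\textbf{Step 3: the finite graph $\mathcal{H}$.} Its vertices are the normalized vectors reachable from $\bar D_0$. Each symbol $a$ gives an edge $\bar D\to\overline{D\otimes A_a}$ with integer weight $\min(D\otimes A_a)-\min D$. This weight depends only on $\bar D$ because the min-plus product commutes with adding constants. Then $\min_v d_H(w,v)$ is exactly the total weight of the path in $\mathcal{H}$ read along $w$, and since $w$ ranges over all words, $\max_w$ becomes a maximum over all length-$n$ paths from the initial vertex. Decomposing long paths into simple cycles plus a remainder of bounded length (at most $|\mathcal{H}|$), the limit exists and equals the maximum cycle mean over cycles reachable from the initial vertex. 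If the definition of $R$ uses a $\limsup$ or takes $v$ from a bi-infinite point, I will adjust the boundary terms; each such change affects the total by $O(1)$.

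\textbf{Step 4: conclusion.} A cycle mean is a ratio of integers, so $R(X_\mathcal{G})$ is rational, which gives \Cref{thm:covering_radius_rational}. For \Cref{thm:covering_radius_compute}, $k$ is computable from $G$, for instance as the least $k$ with $M^k>0$ for the adjacency matrix $M$. $\mathcal{H}$ is then built by breadth-first exploration from $\bar D_0$, and the maximum mean cycle is computed by Karp's algorithm on the reachable part of $\mathcal{H}$.
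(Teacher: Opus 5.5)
Your proposal is correct, but it takes a genuinely different route from the paper. The shared starting point is this: your $A_{w_1}\otimes\cdots\otimes A_{w_n}$ is the paper's $M(w)$ for the one-vertex graph $H$, your Step 1 is the paper's identity $V_n(G,H,P)=R(\mathcal{C}_n(\mathcal{G}))$, and both arguments use primitivity of $G$ to bound a spread.

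The difference is how the outer maximum over words is handled. The paper keeps whole matrices. It prunes them to the $<$-maximal (non-improvable) ones, and proves in \Cref{lemma:non_imp_almost_optimal} that these all lie within $O(1)$ of the common constant $V_n$. It then iterates the set-valued recursion $\cM^\dagger_{n+m}(u\to w)=\bigl(\bigcup_v \cM_n^\dagger(u\to v)*\cM_m^\dagger(v\to w)\bigr)^\dagger$ until the normalized tuple $(\overline{\cM}^\dagger_n(u\to v))_{u,v}$ repeats, and reads off $V=V^\dagger/k$.

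You instead do three things:
\begin{enumerate}
\item collapse the matrix to the vector of minima over the starting vertex, which suffices because Bob's endpoints are free;
\item normalize each vector by its own minimum;
\item move the increment of the minimum into an edge weight.
\end{enumerate}
This determinizes the problem into a finite weighted automaton in which $\max_w$ becomes a maximum-weight path, so the value is a maximum cycle mean.

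Your route buys a simpler argument. It needs no pruning and nothing like \Cref{lemma:non_imp_almost_optimal}, because your spread bound holds for every word, not just optimal ones. In fact the bound is $k$ rather than $2k$, since step costs lie in $\{0,1\}$ and $\min D_j\ge\min D_{j-k}$. It also gives a concrete algorithm via Karp's method, though $\mathcal{H}$ may be exponentially large in $|\cV{G}|$.

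The paper's heavier route buys exact eventual periodicity of the sets of non-improvable matrices, $\cM^\dagger_{n+k}=\cM^\dagger_n+V^\dagger$, and it is set up for an arbitrary primitive $H$. That periodicity is reused in \Cref{sec:non_improvable_paths} to show that $X_H^\dagger$ and $Y^\dagger_{(H,G)}$ are sofic and that optimal strategies can be taken periodic; your argument does not directly give this. Your automaton would extend to general $H$ by taking a product with $H$, but as written it is tailored to the one-vertex case.

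Two minor points. First, the alphabet in Step 1 should be $\Sigma^n$, not $\{0,1\}^n$. Second, no boundary adjustment is needed: since $G$ is irreducible, every vertex has an incoming edge, so $D_j$ is finite for every $j$. For the same reason every finite walk extends to a bi-infinite one, so labels of length-$n$ walks are exactly $\mathcal{C}_n(X_\mathcal{G})$.
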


In the above, $X_{\mathcal{G}}$ is the edge shift corresponding to the labeled graph $\mathcal{G}$. We will recall the definition of an edge shift in the following section, specifically in  \eqref{eq:X_sofic}.

We obtain these results as part of a more general framework that can be expressed as a certain two-player game, as in \cite{meyerovitch2025nonalternatingmeanpayoffgames}.
The proof methods and results presented in this paper are rather elementary and combinatorial in nature.

The paper is organized as follows: In \Cref{sec:prelim}, we quickly recall basic concepts and terminology and notation about words, labeled graphs and shift spaces. We also recall formally the definition of a shift space. In \Cref{sec:tropical_conv}, we present a certain binary operation on integer-valued bi-variate functions which we call ``tropical convolution" that will be used in the proof of our main results.
In \Cref{sec:mean_payoff_game}, we present a variation on the mean payoff games introduced by A. Ehrenfeucht and J. Mycielski in \cite{EhrenfeuchtMycielski}, and deduce \Cref{thm:covering_radius_rational,thm:covering_radius_compute} from \Cref{thm:game_value_rational_computable}, a result about these modified mean payoff games.

In \Cref{sec:non_improvable_paths}, we consider shift spaces
of ``optimal strategies'' for the mean payoff games introduced in \Cref{sec:mean_payoff_game}. We prove that these shift spaces are sofic shifts. 
Finally, in \Cref{sec:concluding_remarks}, we conclude with further remarks and open problems. 

\noindent\textbf{Acknowledgment:} This research was supported by Israel Science Foundation grant no. 985/23.

\section{Preliminaries}\label{sec:prelim}
\subsection{Words and codes}
Let $\Sigma$ be a finite set of ``symbols'', for instance $\Sigma=\{0,1\}$.
We refer to $\Sigma$ as the ``alphabet'', and refer to $n$-tuples in $\Sigma^n$ as ``words" of length $n$ over the alphabet $\Sigma$.

A \emph{code} is a set of words $\mathcal{C} \subseteq \Sigma^n$,
and the \emph{length} of the code is the number $n$.

\subsection{Labeled graphs}
In this paper by a \emph{graph}, we always mean a \emph{finite, directed} graph, namely a pair $G=(\cV{G},\cE{G})$, where $\cV{G}$ is the finite set of vertices and $\cE{G}$ is the finite set of (directed) edges. Every edge $e \in \cE{G}$ has a \emph{source} $s(e) \in \cV{G}$ and a \emph{target} $t(e) \in \cV{G}$. 
We allow parallel edges, i.e. it is possible that two vertices have more than one edge with a common source and common target. We also allow self-loops, i.e. edges such that $s(e)=t(e)$.
A \emph{walk} of length $n$ in the graph $G$ is a tuple $(e_1,\ldots,e_n) \in \cE{G}^n$ such that $t(e_k)=s(e_{k+1})$ for every $1\le k <n$. Given $v,w \in \cV{G}$ we denote by $\cW_n(G, v \to w)$ the  the set of length $n$ walks in $G$ that start at the vertex $v$ and terminate at the vertex $w$:
\begin{equation}
    \cW_n(G, v \to w) := \left\{
    (e_j)_{j = 1}^n \in \cE{G}^n ~:~ s(e_1) = v, t(e_n)=w,~ t(e_{i-1})=s(e_{i}) ~\forall 1 < i  \le n
    \right\}.
\end{equation}
We also denote
\begin{align*}
\cW_n(G,v)  & := \bigcup_{w \in \cV{G}}\cW_n(G,v \to w) ,   & \cW_n(G)  & := \bigcup_{v \in \cV{G}}\cW_n(G,v),
\end{align*}
and
\[
\cW_{\infty}(G) := \left\{ (e_n)_{n \in \N} \in \cE{G} ~:~ t(e_n)=s(e_{n+1}) ~ \forall n \in \N \right\}.
\]

Given $q \in \cW_n(G)$ and $1\le i \le n$ we denote by $q_i \in \cE{G}$ the $i$th edge in $q = (q_1, \ldots, q_n)$. 

Given $n,m \in \N$; $u,v,w \in \cV{G}$; $p \in \cW_n(G,u \to v)$; and $q \in \cW_m(G,v \to w)$, the \emph{concatenation} which we denote by $p * q \in \cW_{n+m}(G,u \to w)$. is given by
\[
(p*q)_i =\begin{cases}
    p_i & \textrm{if $1\le i \le n$,}\\
    q_{i-n} & \textrm{if $n+ 1\le i \le n+m$.}
\end{cases}
\]

A directed graph $G = (\cV{G}, \cE{G})$ is called \emph{irreducible} if any two vertices are connected by a walk. 

A \emph{cycle}
 in $G$ is a walk $(e_1,\ldots,e_n)$ such that $s(e_1)=t(e_n)$. Note that we do \emph{not} assume that a cycle satisfies any injectivity condition. An irreducible
graph is called \emph{primitive} if the greatest common divisor of all cycle lengths is 1.

A labeled graph is a pair $\mathcal{G}=(G,L)$, where $G$ is a graph and $L:\cE{G} \to \Sigma$ is a \emph{labeling function} on the edges of $G$, where $\Sigma$ is a finite set.
Given  $p \in \cW_{n}(G)$ we denote
\[
L(p) := (L(p_1),\ldots,L(p_n)) \in \Sigma^n.
\]
Given a labeled graph $\mathcal{G}=(G,L)$, we denote by $\mathcal{C}_n(\mathcal{G}) \subseteq \Sigma^n$ the words of length $n$ that occur as a the labels of some walk of length $n$ on the  underlying graph $G$:
\[
\mathcal{C}_n(\mathcal{G}) := \left\{ (L(p) \in \Sigma^n ~:~ p \in \cW_n(G) \right\}.
\]

We say that $\mathcal{G}=(G,L)$ is irreducible (primitive) when the underlying graph $G$ is irreducible (primitive). 

\subsection{Symbolic dynamics: Shift spaces  and sofic shifts}\label{subsec:sofic_shifts}
Let $\Sigma$ be a finite set of ``symbols'', for instance $\Sigma=\{0,1\}$.
We refer to $\Sigma$ as the ``alphabet'', and refer  to tuples in $\Sigma^n$ as \emph{words of length $n$} over the alphabet $\Sigma$. We denote the set of words over $\Sigma$ by $\Sigma^* := \bigcup_{n=0}^\infty \Sigma^n$. 

Considering $\Sigma$ as a finite discrete topological space, the space $\Sigma^\Z$ of $\Sigma$-valued functions on $\Z$ can be equipped with the product topology, which makes it a compact metrizable topological space.
It is convenient to think of the elements of $\Sigma^\Z$ as bi-infinite sequences with values in $\Sigma$.
Given $x \in \Sigma^\Z$ and $m,n \in \Z$ with $m \le n$ we denote
\[
x_{[m,n]}:= (x_m,\ldots,x_n) \in \Sigma^{m-n+1}.
\]

The \emph{shift map} is the function  $\sigma:\Sigma^\Z\to \Sigma^\Z$ defined by 
\[
\sigma((x_n)_{n \in \Z})= (x_{n+1})_{n\in \Z}.
\]
Then $\sigma:\Sigma^\Z\to \Sigma^\Z$ is a homeomorphism.
A \emph{shift space} or \emph{subshift} is a closed, $\sigma$-invariant subset $S \subseteq \Sigma^\Z$.

Given a shift space $S$ and $n \in \N$ we denote 
by $\mathcal{C}_n(S)$ the set of admissible words of length $n$ for $S$. That is:
\begin{equation*}\label{eq:C_n_def}
    \mathcal{C}_n(S) := \left\{ x_{[1,n]} ~:~ x \in S\right\} \subseteq \Sigma^n.
\end{equation*}

Also denote
\[
\mathcal{C}(S) := \bigcup_{n \in \N}\mathcal{C}_n(S).
\]

\begin{definition}\label{def:generator_of_shift}
Consider a shift space $S \subseteq \Sigma^\Z$. We say that a set $\mathcal{F} \subseteq \Sigma^*$ \emph{generates} $S$ if
$$
    S = \left\{x \in \Sigma^\Z ~:~  x_{[m,n]} \not \in \mathcal{F}~ \forall m\le n  \right\}.
$$
\end{definition}

It is well-known (c.f. \cite[Theorem 6.1.21]{LindMarcus}) that a set $S \subseteq \Sigma ^\Z$ is a shift space if and only if there exist $\mathcal{F} \subseteq \Sigma^*$ that generates $S$, although $\mathcal{F}$ is not in general unique. A shift space $S \subseteq \Sigma^\Z$ that can be generated by a \emph{finite} set $\mathcal{F} \subset \Sigma^*$
is called a \emph{shift of finite type (SFT)}.

A shift space $S \subseteq \Sigma^\Z$ is called \emph{sofic} if there exists a labeled graph $\mathcal{G}=(G,L)$ such that $S$ is set bi-infinite sequences that are obtained by reading the $L$-labels of a bi-infinite walk in $G$. We call such a labeled graph $\mathcal{G}$ a \emph{labeled graph presentation} of $S$. Equivalently, a sofic shift is a shift space of the form $X_\mathcal{G}$ where 
\begin{equation}\label{eq:X_sofic}
    X_\mathcal{G} = \left\{ x \in \Sigma^\Z~:~  x_{[m,n]} \in \mathcal{C}_{n-m+1}(\mathcal{G}) ~ \forall m \le n \right\}.
\end{equation}

It is well known that any shift of finite type is a sofic shift. For details and discussion of sofic shifts and their importance, see \cite[Chapter 3]{LindMarcus}.  
In particular, sofic shifts are a natural model for information storage and transmission.

A sofic shift $X$ is irreducible (primitive) if there exists an irreducible (primitive) labeled graph $\mathcal{G}$ such that $X=X_\mathcal{G}$. We note that there are equivalent ``intrinsic'' definitions, not involving a representation by a labeled graph. See for instance \cite[Chapter 6]{LindMarcus}.

\subsection{The covering radius of shift space}\label{subsec:covering_redius}

The \emph{hamming distance} between two words $u,v \in \Sigma^n$ is the number of coordinates in which $u$ differs from $v$:
\[
d_H(u,v) : = |\{1\le i \le n ~:~ u_i \ne v_i \}|=\sum_{i=1}^n \delta(u_i,v_i),
\]
where
\[
\delta(a,b) = \begin{cases}
    1 & \mbox{ if } a=b\\
    0 & \mbox{otherwise.}
\end{cases}
\]
The \emph{covering radius} of a code $\mathcal{C} \subseteq \Sigma^n $ is given by
\[
R(\mathcal{C}):= \max_{v \in \Sigma^n}\min_{w \in \mathcal{C}}d_H(v,w).
\]

Let $X \subseteq \Sigma^\Z$ be a shift space. Following  \cite{MR4751633}, the \emph{(lower/upper) covering radius} of $X$ are defined respectively by
\begin{equation*}\label{eq:covering_radius_def}
    \underline{R}(X) = \liminf_{n \to \infty}\frac{1}{n}R(\mathcal{C}_n(X)) \mbox{ and }
    \overline{R}(X) = \limsup_{n \to \infty}\frac{1}{n}R(\mathcal{C}_n(X)).
\end{equation*}
Whenever $\overline{R}(X)=\underline{R}(X)$, their common value is called the \emph{covering radius} of $X$. When the limit exists we will denote the covering radius by 
\[
R(X) = \lim_{n \to \infty}\frac{1}{n}R(\mathcal{C}_n(X)).
\]

We define the \emph{hamming pseudo-metric} on $\Sigma^\Z$ by
\begin{equation*}\label{eq:overline_d_def}
\overline{d}_H(x,y)= \limsup_{n\to \infty}\frac{1}{n}d_H((x_1,\ldots,x_n),(y_1,\ldots,y_n)), ~ x,y \in \Sigma^\Z.
\end{equation*}

Given $y \in \Sigma^\Z$ and $X \subseteq \Sigma^\Z$ 
define
\begin{equation*}\label{eq:overline_d_X_def}
\overline{d}_H(y,X) := \inf_{x \in X}\overline{d}_H(y,x).
\end{equation*}
Similarly, define
\begin{equation*}\label{eq:underline_d_def}
\underline{d}_H(x,y) : = \liminf_{n \to \infty} \frac{1}{n} d_H((x_1,\ldots,x_n),(y_1,\ldots,y_n)), ~ x,y \in \Sigma^\Z   
\end{equation*}
and 
\[
\underline{d}_H(y,X) = \inf_{x \in X}\underline{d}_H(y,x).
\]
It is easy to see from the definition of the (upper) covering radius $R(X)$ that for any subshift $X \subseteq \Sigma^\Z$ and any $y \in \Sigma^\Z$ 
\[
\overline{d}_H(y,X) \le \overline{R}(X). 
\]

It was shown in \cite[Proposition 7]{MR4751633} that the limit $R(\mathcal{G}):= R(X_\mathcal{G})=\lim_{n \to \infty} \frac{1}{n}R(\mathcal{C}_n(\mathcal{G}))$ exists for any labeled graph $\mathcal{G}$. In fact, the statement of  \cite[Proposition 7]{MR4751633} is slightly more general.

\section{Tropical convolution and maximal functions}\label{sec:tropical_conv}
In this section, we present and discuss a binary operation called tropical convolution on the space $\Z_{\infty}^{E \times E}$ of $\Z_{\infty} : = \Z \cup \{+\infty\}$-valued functions on a product space $E \times E$, where $E$ is an arbitrary \emph{finite} set. This operation will be important for proving \Cref{thm:game_value_rational_computable}, which in turn will help us prove \Cref{thm:covering_radius_compute,thm:covering_radius_rational}.

Given two functions $f,g \in \Z_\infty^{E \times E}$ define the \emph{tropical convolution} $f * g$ by
\[
(f*g)(u,w)= \min_{v \in E}\left(f(u,v)+g(v,w)\right),~ u,w \in E.
\]
A routine verification shows that the tropical convolution is an associative operation in the sense that $(f*g)*h=f*(g*h)$, so we will freely use expressions such as $f*g*h$. Given subsets $A,B \subseteq \Z_\infty^{E \times E}$, we use the notation
\[
A*B : = \{ f*g ~:~ f \in A,~ g\in B\}.
\]
Tropical convolution can be seen, as the name would suggest, as a tropical analog of the more classical notion of convolution, where we replace the classical ring $(\Z, +, \cdot)$ with the tropical minplus semiring $(\Z_{\infty}, \min(\cdot, \cdot), +)$. This idea of tropical convolution has been used previously in the construction of neural networks, as in \cite{TropicalConvolutionNeuralNetworks}.

We define a partial order on the set $\Z_\infty^{E \times E}$ by
\[
f < g ~\Longleftrightarrow ~ \forall v,w \in E\, \left( f(v,w) < g(v,w) \right).
\]

Given $A \subseteq \Z_\infty^{E \times E}$, we say that $f \in A$ is \emph{$<$-maximal} in $A$ if there does not exists $g \in A$ such that $f < g$. 
We denote the set of $<$-maximal elements in $A$ by $A^\dagger$.

\begin{lemma}\label{lem:maximal_factorization}
    Let $A,B \subset \Z_\infty^{E \times E}$ be finite sets. If $f \in A$, $g \in B$ are such that $f* g \in (A*B)^\dagger$ and $f*g$ is $\Z$-valued (i.e. does not attain the value $+ \infty$), then $f \in A^\dagger$ and $g \in B^\dagger$.
\end{lemma}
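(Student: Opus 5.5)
We argue by contraposition, and by symmetry we only need to treat $f$. Suppose $f \notin A^\dagger$. Then there is $f' \in A$ with $f < f'$, meaning $f(u,v) < f'(u,v)$ for every $u,v \in E$. Since the order is strict in every coordinate, this forces $f(u,v) \in \Z$ for all $u,v$, because $+\infty < x$ is impossible. It also gives $f'(u,v) \ge f(u,v)+1$ whenever $f'(u,v)$ is finite, and $f'(u,v)=+\infty$ otherwise. We will show that $f*g < f'*g$. Since $f'*g \in A*B$, this contradicts the maximality of $f*g$ in $A*B$.

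Fix $u,w \in E$. By hypothesis $(f*g)(u,w)$ is a finite integer. For each $v \in E$ we have $f'(u,v)+g(v,w) \ge f(u,v)+1+g(v,w)$. This holds when everything is finite, and also when either side involves $+\infty$, using the convention $+\infty + x = +\infty$ and $+\infty \ge$ anything. Taking the minimum over $v$ gives
\[
(f'*g)(u,w) \ge (f*g)(u,w)+1 > (f*g)(u,w).
\]
The last inequality is strict precisely because $(f*g)(u,w)$ is finite. If it were $+\infty$, we would only get $+\infty \ge +\infty$, not a strict inequality. This is exactly where the $\Z$-valuedness hypothesis enters. Hence $f*g < f'*g$, contradicting $f*g \in (A*B)^\dagger$.

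The argument for $g$ is identical: from $g<g'$ with $g' \in B$ we get $f(u,v)+g'(v,w) \ge f(u,v)+g(v,w)+1$ for every $v$, and therefore $f*g'<f*g$ fails the other way, i.e. $f*g < f*g'$. The main point requiring care is the bookkeeping with $+\infty$: the inequality $a < b$ in $\Z_\infty$ implies $b \ge a+1$ and $a$ finite, and the minimum of the shifted sums is shifted by at least $1$. Finiteness of $f*g$ is what makes the final inequality strict. Finiteness of $A$ and $B$ is not needed for this lemma.
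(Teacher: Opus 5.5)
Your proof is correct and follows the paper's argument: an $f'\in A$ with $f<f'$ forces $f$ to be $\Z$-valued, finiteness of $f*g$ then gives $f*g<f'*g$, and the symmetric argument gives $f*g<f*g'$. The only difference is cosmetic: you handle the $+\infty$ cases through the integer gap $f'(u,v)\ge f(u,v)+1$, whereas the paper restricts the minimum to those $v$ with $g(v,w)\in\Z$; your remark that finiteness of $A$ and $B$ is not needed here is also correct.
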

\begin{proof}
We prove that $f$ is $<$-maximal in $A$. If $f$ takes the value $+ \infty$, then $f$ is trivially $<$-maximal, so consider the case where $f$ is $\Z$-valued. If $f$ is $\Z$-valued and not $<$-maximal in $A$, then there exists $f' \in A$ with $f' > f$. Since $f*g$ is assumed to be $\Z$-valued, we can write
\begin{align*}
(f*g)(u, w)   & = \min_{\substack{v \in E, \\
g(v, w) \in \Z}} \left[ f(u, v) + g(v, w) \right] \\
    & < \min_{\substack{v \in E, \\
g(v, w) \in \Z}} \left[ f'(u, v) + g(v, w) \right] \\
    & \leq \min_{v \in E} \left[ f'(u, v) + g(v, w) \right] & = \left( f'*g \right) (u, w)  & & \textrm{for all $u, w \in E$.}
\end{align*}
Thus $f * g < f'*g$, contradicting the $<$-maximality of $f*g$ in $A*B$.

An analogous argument proves that $g$ must be $<$-maximal in $B$.
\end{proof}

\begin{lemma}\label{lem:star_trop_conv}
    For any  finite sets $A,B \subset \Z^{E \times E}$ we have 
    \[
    (A^\dagger * B^\dagger)^\dagger = (A*B)^\dagger.
    \]
\end{lemma}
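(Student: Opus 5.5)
The plan is to prove the two inclusions separately. The main tool is \Cref{lem:maximal_factorization}, which applies here because $A,B \subset \Z^{E\times E}$ are $\Z$-valued, so every tropical convolution $f*g$ with $f\in A$, $g\in B$ is again $\Z$-valued. We also use that $A^\dagger \subseteq A$ and $B^\dagger\subseteq B$, so $A^\dagger * B^\dagger \subseteq A*B$.

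\emph{Step 1: $(A*B)^\dagger \subseteq (A^\dagger*B^\dagger)^\dagger$.} Take $h \in (A*B)^\dagger$ and write $h = f*g$ with $f\in A$, $g \in B$. Since $h$ is $\Z$-valued and $<$-maximal in $A*B$, \Cref{lem:maximal_factorization} gives $f\in A^\dagger$ and $g \in B^\dagger$. Hence $h \in A^\dagger*B^\dagger$. No element of $A*B$ is strictly above $h$, so in particular no element of the subset $A^\dagger * B^\dagger$ is. Therefore $h\in (A^\dagger*B^\dagger)^\dagger$.

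\emph{Step 2: $(A^\dagger*B^\dagger)^\dagger \subseteq (A*B)^\dagger$.} Take $h \in (A^\dagger*B^\dagger)^\dagger$ and suppose, for contradiction, that $h$ is not maximal in $A*B$. Then there is some $h_1\in A*B$ with $h<h_1$.

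The relation $<$ is irreflexive and transitive, since it is pointwise strict inequality of integers. So starting from $h_1$, any chain $h_1<h_2<\cdots$ inside $A*B$ consists of distinct elements. Because $A*B$ is finite, such a chain must terminate. Its last element $h^*$ lies in $(A*B)^\dagger$ and satisfies $h < h^*$ by transitivity.

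By Step 1, $h^* \in (A^\dagger * B^\dagger)^\dagger \subseteq A^\dagger*B^\dagger$. This contradicts the maximality of $h$ in $A^\dagger*B^\dagger$, so $h \in (A*B)^\dagger$.

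The only delicate point is this last argument in Step 2: every non-maximal element of $A*B$ must lie strictly below some maximal one. This rests on finiteness of $A*B$ together with transitivity of $<$, which is why the lemma is stated for finite sets. Everything else is a direct application of \Cref{lem:maximal_factorization}.
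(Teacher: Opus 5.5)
Your proof is correct and follows the paper's argument. Both inclusions are handled the same way: the first by \Cref{lem:maximal_factorization}, and the reverse by the finiteness argument that any non-maximal element of $A*B$ lies strictly below a maximal one, which factors through $A^\dagger * B^\dagger$. You additionally spell out, via transitivity of $<$, why the ascending chain terminates, a step the paper leaves implicit.
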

\begin{proof}
    From \Cref{lem:maximal_factorization}, it follows that
    $(A*B)^\dagger$ consists exactly of the elements of $A^\dagger * B^\dagger$ that are $<$-maximal in $A*B$.
    In particular, any element of $(A*B)^\dagger$ is a $<$-maximal element of $A^\dagger*B^\dagger$, so 
    \[
    (A*B)^\dagger \subseteq (A^\dagger * B^\dagger)^\dagger.
    \]
    
    To complete the proof, we need to check that any element  $f*g \in A^\dagger * B^\dagger$ that is $<$-maximal in $A^\dagger * B^\dagger$ is also $<$-maximal in $A*B$; we prove the contrapositive, that if $f * g$ is if not maximal in $A * B$, then $f* g$ is not maximal in $A^\dagger * B^\dagger$.
    By the assumption that $A$ and $B$ are finite, if $f*g \in A*B$ is not $<$-maximal in $A*B$, then there exists $f'*g' \in A*B$ which is $<$-maximal in $A*B$ and $f*g < f'*g'$ (otherwise we get an infinite $<$-chain in $A*B$, contradicting finiteness).
    By the earlier argument, if $f'*g' \in A*B$ is $<$-maximal in $A*B$, then $f' \in A^\dagger$ and $g' \in B^\dagger$, meaning $f * g$ is not maximal in $A^\dagger * B^\dagger$. By contraposition, it follows that any element of $(A^\dagger * B^\dagger)^\dagger$ is also in $(A*B)^\dagger$.
\end{proof}

For $n \in \Z$ and $f \in \Z^{E \times E}$, let $f+n\in  \Z^{E \times E}$ be the function given by pointwise incrementing $f$ by $n$.
The direct unraveling of the definitions show that for any $f,g \in \Z^{E \times E}$ and $n \in \Z$ we have:
\begin{equation}\label{eq:conv_plus_const}
    (f+n)*g = f*(g+n) = (f* g) +n.
\end{equation}

Given $A \subseteq \Z^{E \times E}$ and $n \in \Z$, set 
\[
A+n : = \{f+n~:~ f \in A \}.
\]
Then \eqref{eq:conv_plus_const} translates to the fact that for any $A,B \subseteq \Z^{E \times E}$ and $n \in \Z$ the following holds:
\begin{equation}\label{eq:set_conv_plus_const}
(A+n)*B=A*(B+n)=(A*B) + n.    
\end{equation}

For $f \in \Z^{E \times E}$, we denote
\begin{align*}
\|f\|   &:=\max_{u,v \in E}|f(u,v)|, &
\max f & := \max_{u,v \in E}f(u,v), \\
\min f  & := \min_{u,v \in E}f(u,v) , &
\Delta f &  :=\max f -\min f .
\end{align*}

\begin{lemma}\label{lem:3_prodcuct_delta}
    Suppose that  $f_1,f_2,f_3 \in \Z^{E\times E}$ and $f:= f_1 * f_2 * f_3$.
    Then 
    \begin{equation}\label{eq:Delta_f_bound}
        \Delta f \le 2\|f_1\| + 2\|f_3\|.
    \end{equation}
\end{lemma}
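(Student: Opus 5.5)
The idea is to compare $f(u,w)$ with $f(u',w')$ for arbitrary pairs by rerouting the minimizing intermediate indices. Write $g := f_2$, so that
\[
f(u,w)=\min_{a,b\in E}\bigl(f_1(u,a)+g(a,b)+f_3(b,w)\bigr).
\]
It suffices to show $f(u,w)\le f(u',w')+2\|f_1\|+2\|f_3\|$ for all $u,w,u',w'\in E$. Taking $u,w$ to attain $\max f$ and $u',w'$ to attain $\min f$ then gives \eqref{eq:Delta_f_bound}.

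The key step is a substitution in the minimum. Fix $u',w'$ and choose $a^*,b^*\in E$ attaining the minimum defining $f(u',w')$, so
\[
f(u',w')=f_1(u',a^*)+g(a^*,b^*)+f_3(b^*,w').
\]
All values are integers, so these minimizers exist because $E$ is finite. Next, bound $f(u,w)$ by the single term of its defining minimum with the same intermediates $a^*,b^*$:
\[
f(u,w)\le f_1(u,a^*)+g(a^*,b^*)+f_3(b^*,w).
\]
Subtracting the two expressions cancels the middle term $g(a^*,b^*)$, leaving
\[
f(u,w)-f(u',w')\le \bigl(f_1(u,a^*)-f_1(u',a^*)\bigr)+\bigl(f_3(b^*,w)-f_3(b^*,w')\bigr).
\]

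Finally, each bracket is a difference of two values of the same function, so it is bounded by $2\|f_1\|$ and $2\|f_3\|$ respectively. This yields the claim.

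There is essentially no obstacle here. The only point needing care is that the middle factor $f_2$ may be unbounded relative to $f_1,f_3$, which is why we reuse the same intermediate indices $a^*,b^*$: this makes the $f_2$ contribution cancel exactly rather than having to be estimated. Since all functions are $\Z$-valued, there are no $+\infty$ issues.
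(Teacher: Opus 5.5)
Your proof is correct and is essentially the paper's argument: both compare the two triple minima through common intermediate indices, so the $f_2$ term cancels and only differences of values of $f_1$ and of $f_3$ remain. Your choice of a minimizing pair $(a^*,b^*)$ for $f(u',w')$ is in fact the correct form of the step the paper states as $\min\phi-\min\psi\le\min_x(\phi(x)-\psi(x))$. That inequality goes the wrong way in general; the valid bound is $\phi(x_\psi)-\psi(x_\psi)\le\max_x(\phi(x)-\psi(x))$ at a minimizer $x_\psi$ of $\psi$, and the paper's final estimate is unaffected.
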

\begin{proof}
        For any finite set $X$ and functions $\phi,\psi:X \to \mathbb{R}$ we have 
    \[\min_{x \in X}\phi(x)-\min_{x \in X}\psi(x) = \min_{x \in X}\max_{y \in Y}(\phi(x)-\psi(y)) \le \min_{x \in X}(\phi(x)-\psi(x)).\] 
    For any fixed $v,w,v',w' \in E$, applying the above formula on the functions $\phi,\psi:E\times E\to \mathbb{R}$ given by
    
    \[\phi(u,u'):=f_1(v,u)+f_2(u,u')+f_3(u',w)\]
    and
    \[\psi(u,u')=f_1(v',u)+f_2(u,u')+f_3(u',w'),\]
    we obtain the following inequality:
    \[
     f(v,w) -f(v',w') \le \min_{u,u' \in \cV{G}}( f_1(v,u) - f_1(v',u)+f_3(u',w)-f_3(u',w'))
    \]
    Thus,
    \[
     f(v,w) -f(v',w') \le 2\|f_1\| + 2\|f_2\|.
    \]
    Taking maximum over $(v,w) \in E \times E$ and minimum over $(v',w') \in E \times E$ we get \eqref{eq:Delta_f_bound}.
\end{proof}
Given a finite set $A \subseteq \Z^{E\times E}$, set 
\[
\Delta A := \max_{f \in A}\max f -\min_{g \in A}\min g.
\]

\section{A zero sum two player mean payoff game}\label{sec:mean_payoff_game}
In this section we derive the main results, namely \Cref{thm:covering_radius_compute} and \Cref{thm:covering_radius_rational} as a special case of a slightly more general result. Underlying this more general result are certain types of zero-sum two-player game. We introduced and studied these games in \cite{meyerovitch2025nonalternatingmeanpayoffgames} and called them ``non-alternating mean-payoff games." As mentioned in \cite{meyerovitch2025nonalternatingmeanpayoffgames}, these games can be considered as a variant or analog of the much more classical ``alternating'' mean-payoff games introduced in \cite{EhrenfeuchtMycielski}. We will quickly recall the rules of these games.

Let $G=(\cV{G},\cE{G})$ and $H=(\cV{H},\cE{H})$ be two graphs, and let $P:\cE{G} \times \cE{H} \to \Z$ be a function which we will refer to as ``the payoff function." For the rest of this section we will assume that $G$ and $H$ are primitive. The game takes place between two players whom we name Alice and Bob. The $n$-turn game goes as follows: First, Alice will choose a walk $p \in \cW_n(H)$ of length $n$, after which Bob will choose (with full knowledge of Alice's play) a walk $q \in \cW_n(H)$ of length $n$; the game will then conclude, and Bob will pay Alice $\sum_{i = 1}^n P(q_i, p_i)$ pesos.

Given $n \in \N$, we define 
\begin{equation*}\label{eq:V_n_def}
V_n(G,H,P) := \max_{p \in \cW_n(H)}\min_{q \in \cW_n(G)}\sum_{i=1}^n P(q_i,p_i)
\end{equation*}
and
\begin{equation}\label{eq:V_def}
    V(G,H,P) := \lim_{n \to \infty}\frac{1}{n}V_n(G,H,P).
\end{equation}
It is known that the limit in \eqref{eq:V_def} exists, see for instance
\cite[Proposition 2.8]{young2025adversarialergodicoptimization} or \cite[Theorem 3.1]{meyerovitch2025nonalternatingmeanpayoffgames}. The fact that the limit exists will will also follow from \Cref{lem:M_star_periodic} later in this article. We can think of this as the value of a kind of infinite version of the game.

Our main result in this section is the following:
\begin{thm}\label{thm:game_value_rational_computable}
    Let $G$ and $H$ be primitive graphs, and let $P:\cE{G} \times \cE{H} \to \Z$ be a payoff function. Then $V(G,H,P)$ is a rational number.
    Furthermore, there exists an algorithm that computes $V(G,H,P)$ from $G,H,P$ in finite time.
\end{thm}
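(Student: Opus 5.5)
We encode the $n$-turn game by a finite set of functions in $\Z^{E\times E}$ and use tropical convolution to make the value eventually periodic. Take $E := \cV{G}$. For each walk $p \in \cW_n(H)$ define $f_p \in \Z_\infty^{E\times E}$ by
\[
f_p(u,w) := \min_{q \in \cW_n(G,u\to w)} \sum_{i=1}^n P(q_i,p_i),
\]
with the convention $\min\emptyset=+\infty$. Bob's best reply to $p$ then has payoff $\min_{u,w} f_p(u,w)$.

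The basic identity is $f_{p*p'} = f_p * f_{p'}$, because Bob's walk splits at the vertex of $G$ where it sits at time $n$. We also need to track the endpoints of Alice's walk, so we index by $a,b\in\cV{H}$. Set
\[
A_n(a,b) := \{ f_p : p \in \cW_n(H,a\to b)\}.
\]
Then $A_{n+m}(a,c) = \bigcup_b A_n(a,b)*A_m(b,c)$. Since $G$ is primitive, there is an $N_0$ such that all walks of length $\ge N_0$ exist between any two vertices, so every $f_p$ with $n\ge N_0$ is $\Z$-valued.

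Alice wants to maximise $\min f_p$, so only $<$-maximal elements matter. If $f<g$ pointwise then $\min f<\min g$, and a finiteness argument shows that the maximum of $\min$ over $A$ is attained on $A^\dagger$. Define $M_n(a,b) := A_n(a,b)^\dagger$, restricted to $\Z$-valued elements once $n\ge N_0$. By \Cref{lem:star_trop_conv}, $M_{n+m}(a,c) = \bigl(\bigcup_b M_n(a,b)*M_m(b,c)\bigr)^\dagger$. Here we need a version of that lemma for unions, which should follow by the same proof. Moreover $V_n = \max_{a,b}\max_{f\in M_n(a,b)} \min f$.

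The key step is normalisation. Subtract $\min$ from everything, replacing the family $(M_n(a,b))_{a,b}$ by the family shifted by the constant $c_n := \min_{a,b,f\in M_n(a,b)}\min f$. By \eqref{eq:set_conv_plus_const}, the recursion commutes with shifts: if two normalised families agree, so do all their successors up to a constant. Next we bound the spread of the normalised family uniformly in $n$. Write a length-$n$ walk as a concatenation of pieces of lengths $N_0$, $n-2N_0$ and $N_0$. \Cref{lem:3_prodcuct_delta} gives $\Delta f \le 4\max\|f_{\text{short}}\|$, which bounds the spread of each individual $f$.

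The main obstacle is to also bound the spread across different $f$ in $M_n$ and across different $(a,b)$. Maximality should prevent a maximal element from sitting far below the others. Given $f_p$ and another $f_{p'}$ with $\min f_{p'} \gg \max f_p$, splice: keep the end pieces of $p$ (adjusted via primitivity of $H$) and replace the middle of $p$ by the middle of $p'$. The lemma then yields a function strictly above $f_p$, contradicting maximality. This gives $\Delta(\bigcup_{a,b}M_n(a,b)) \le C$ for a computable constant $C$.

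Once the spread is bounded, the normalised families lie in a finite, computable set. Hence there exist $n_0$ and $k\ge1$, both found by iterating the recursion, such that the normalised family at $n_0+k$ equals the normalised family at $n_0$. It follows that $M_{n+k} = M_n + d$ for all $n\ge n_0$, where $d := c_{n_0+k}-c_{n_0}\in\Z$. Consequently $V_{n+k}=V_n+d$, so $V = d/k \in \Q$.

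The algorithm computes $M_n$ by the recursion with the $\dagger$ operation, normalises, and detects the first repetition. Termination is guaranteed by the bound on the spread, so no a priori knowledge of $C$ is needed to run it.
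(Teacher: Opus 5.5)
Your proposal is correct and follows essentially the same route as the paper. Both arguments use the recursion for the $<$-maximal sets $\cM_n^\dagger$ via tropical convolution, a uniform bound on their spread (the three-piece decomposition plus the splicing-and-maximality argument of \Cref{lemma:non_imp_almost_optimal}), and pigeonhole on the normalized families to get $V_{n+k}=V_n+d$ and hence $V=d/k$; normalizing by the global minimum instead of $V_n$, and stopping at the first repetition without the threshold $N$, are minor variations that do not affect the argument.
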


From \Cref{thm:game_value_rational_computable} we can deduce \Cref{thm:covering_radius_rational} and \Cref{thm:covering_radius_compute} directly as follows:

\begin{proof}[Proof of \Cref{thm:covering_radius_rational} and \Cref{thm:covering_radius_compute}, assuming \Cref{thm:game_value_rational_computable}]
Let $S=X_\mathcal{G}$ be a primitive sofic shift, where  $\mathcal{G}=(G,L)$ is a primitive labeled graph presentation of $S$ and $L: \cE{G} \to \Sigma$ is a labeling function. Define $H$ to be the graph consisting of a unique vertex $v_0$ with $|\Sigma|$ many self-loops:
\begin{equation*}\label{eq:H_aux_def}
\cV{H}=\{v_0\},~ \cE{H} = \Sigma,~ 
s(a)=t(a)=v_0 \mbox{ for all } a \in \Sigma.
\end{equation*}
$\cV{H}=\{v_0\}$, and $\cE{H}=\Sigma$ such that $s(a)=t(a)=v_0$ for every $a \in \Sigma$. Define $P:\cE{G} \times \cE{H} \to \Z$ as
\begin{align*}\label{align:P_reduction_def}
P(e,a)  & = \begin{cases}
    0 &  \mbox{ if } L(e)=a\\
    1 & \mbox{ otherwise}
\end{cases} & \textrm{for $e \in \cE{G}, a \in \Sigma = \cE{H}$.}
\end{align*}
Then because $H$ has a unique vertex $v_0$, for every $n \in \N$ $\cW_n(H)= \cW_n(H, v_0 \to v_0)= \Sigma^n$.

Then the definition of $P$ and $H$ implies that for every $p \in \cW_{n}(G)$ and $q \in \cW_n(H) = \Sigma^n$, we have
\[
\sum_{j=1}^n P(p_i,q_i)= d_H(L(p),w).
\]
It follows directly that 
$V_n(G,H,P)=R(\mathcal{C}_n(\mathcal{G}))$. In particular, with $H$ and $P$ as above, we have
\[
R(X_\mathcal{G})=V(G,H,P).
\]
By the first part of \Cref{thm:game_value_rational_computable},
$V(G,H,P)$ is a rational number so \Cref{thm:covering_radius_rational} follows. Also,  by the second part of \Cref{thm:game_value_rational_computable},
there exists an algorithm that computes $V(G,H,P)$ from $G,H,P$ in finite time. The definitions of  $H$ and $P$ obviously show how the can be computed from the labeled graph $\mathcal{G}$, so \Cref{thm:covering_radius_compute} also  follows. 
\end{proof}

We now introduce some more notation in preparation for the proof of \Cref{thm:game_value_rational_computable}. Given $p \in \cW_n(H)$ and $v,w \in \cV{G}$ denote
\[
M (p , v \to w) := \inf_{q \in \cW_n(G,v \to w)}\sum_{i=1}^n P(q_i,p_i),
\]
where we adopt the convention that $\inf \emptyset = + \infty$. We denote by $M(p) \in \Z_\infty^{\cV{G} \times \cV{G}}$ the function 
\[
M(p) : \cV{G} \times \cV{G} \ni (v,w) \mapsto M(p, v \to w) \in \Z_\infty .
\]
We remark that the function $M(p)$ implicitly depends not only on the walk $p$ and the graph $H$, but also on the graph $G$ and the payoff function $P$, which we consider as being fixed in the background. Note that 
\begin{equation}\label{eq:V_G_H_via_M_p}
V_n(G,H,P)= \max_{p \in \cW_n(H)}\min M(p).    
\end{equation}

The following lemma explains the relevance of the tropical convolution operation to concatenation of walks:
\begin{lemma}\label{lem:tropical_conv}
For every $m,n \in \mathbb{N}$; $u,v,w \in \cV{H}$; $p \in \cW_n(H,u \to v)$; and $q \in \cW_m(H,v \to w)$ we have 
\[
M(p*q)=M(p)*M(q).
\]
\end{lemma}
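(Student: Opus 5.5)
The plan is to unfold both sides pointwise and show that the equality comes from a bijection between walks in $G$ of length $n+m$ and pairs of walks of lengths $n$ and $m$ glued at a middle vertex. Fix $u',w' \in \cV{G}$. By definition,
\[
M(p*q)(u',w') = \inf_{r \in \cW_{n+m}(G,u'\to w')} \sum_{i=1}^{n+m} P\bigl(r_i,(p*q)_i\bigr).
\]
The first step is the decomposition. Every $r \in \cW_{n+m}(G,u'\to w')$ splits uniquely as $r = r' * r''$, where $r' = (r_1,\ldots,r_n) \in \cW_n(G,u'\to v')$ and $r''=(r_{n+1},\ldots,r_{n+m}) \in \cW_m(G,v'\to w')$, with $v' := t(r_n)$. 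Conversely, any such pair with a common middle vertex $v'$ concatenates to an element of $\cW_{n+m}(G,u'\to w')$. So $\cW_{n+m}(G,u'\to w')$ is in bijection with the disjoint union over $v' \in \cV{G}$ of $\cW_n(G,u'\to v') \times \cW_m(G,v'\to w')$.

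The second step is to split the payoff. Since $(p*q)_i = p_i$ for $i \le n$ and $(p*q)_i = q_{i-n}$ for $i > n$, we get
\[
\sum_{i=1}^{n+m} P\bigl(r_i,(p*q)_i\bigr) = \sum_{i=1}^n P(r'_i,p_i) + \sum_{j=1}^m P(r''_j,q_j).
\]
Taking the infimum first over the pair $(r',r'')$ for a fixed $v'$, and then over $v'$, the infimum of a sum of independent terms is the sum of the infima. This gives
\[
\min_{v'\in\cV{G}}\bigl(M(p,u'\to v') + M(q,v'\to w')\bigr) = \bigl(M(p)*M(q)\bigr)(u',w').
\]
Here $E=\cV{G}$ in the tropical convolution, and the hypotheses on $u,v,w \in \cV{H}$ serve only to make $p*q$ a well-defined walk in $H$.

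The only point needing care is the convention $\inf\emptyset=+\infty$. If for some $v'$ one of the two walk sets is empty, that $v'$ contributes no walks on the left. On the right its term is $+\infty$ plus something in $\Z_\infty$, which is $+\infty$ and so does not affect the minimum. If every $v'$ is of this kind, both sides equal $+\infty$. When both sets are nonempty they are finite, so the infima are minima and the interchange of $\inf$ with the sum is immediate. I expect no real obstacle beyond this bookkeeping of the $+\infty$ cases.
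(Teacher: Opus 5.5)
Your proof is correct and follows the same route as the paper: split each walk in $\cW_{n+m}(G,u'\to w')$ at its middle vertex, split the payoff sum accordingly, and interchange the infimum with the sum. Your explicit handling of the $\inf\emptyset=+\infty$ cases is a small addition that the paper leaves implicit.
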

\begin{proof}
For every $u,w \in \cV{H}$ and $m,n \in \N$ we have 
\begin{equation}\label{eq:cW_n_plus_m}
\cW_{n+m}(H,u \to w) = \biguplus_{v \in \cV{H}}\left\{ q*q' ~:~ q \in \cW_n(H,u \to v),~ q' \in \cW_m(H,v \to w)\right\}.
\end{equation}
Hence if    $u,w \in \cV{G}$, $p \in \cW_n(H)$ and $p' \in \cW_m(H)$ with $p$ terminating at the initial vertex of $p'$, then
\[
M(p*p',u \to w) = \min_{v \in \cV{G}}\inf_{q \in \cW_n(G,u \to v)} \inf_{q' \in \cW_m(G,v \to w)}\left( \sum_{i=1}^nP(q_i,p_i) + \sum_{i=1}^mP(q'_{i},p'_i)\right).
\]
It follows that
\[
M(p*p',u \to w) = \min_{v \in \cV{G}} \left[M(p,u \to v)+M(p',v \to w)\right].
\]
We thus have
\[
M(p*p')=M(p)*M(p').
\]
\end{proof}

Given $v,w \in \cV{H}$, set  
\begin{align*}\label{eq:M_n_def}
\cM_n(v\to w)   & : = \{ M(p)~:~ p \in \cW_n(H,v \to w)\} , \\ \cW_n^\dagger(H,v \to w) & : = \left\{ p \in \cW_n(H,v \to w)~:~ M(p) \in \cM_n(H,v \to w)^\dagger \right\} .
\end{align*}
For notational convenience, for $v,w \in \cV{H}$ and $n \in \N$  we also write
\begin{align*}
\cM_n^\dagger(v \to w)  & : = \left( \cM_n(v \to w)\right)^\dagger, & \overline{\cM}_n^\dagger(v \to w) & : = \cM_n^\dagger(v \to w) - V_n(G,H,P). 
\end{align*}

Also, given $v \in \cV{H}$,  let $\cW_n^\dagger(H,v):= \bigcup_{w \in \cV{H}}\cW_n^\dagger(H,v \to w)$ and $\cW_n^\dagger(H):=\bigcup_{v \in\cV{H}}\cW_n^\dagger(H,v)$.
We refer to the elements of $\cW_n^\dagger(H)$ as \emph{non-improvable walks} of length $n$.

The relevance of $\cM_n^\dagger(v \to w)$ to the computation of $V_n(G,H,P)$ is expressed by the following simple lemma:
\begin{lemma}\label{lem:V_max_M_star}
    \begin{equation*}\label{eq:V_n_via_M_n_star}
    V_n(G,H,P)= \max_{u,v \in \cV{H}}\max_{f \in \cM_n^
\dagger(u \to v)} \min f.    
    \end{equation*}
\end{lemma}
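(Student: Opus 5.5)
The plan is to start from \eqref{eq:V_G_H_via_M_p}, which gives $V_n(G,H,P)=\max_{p \in \cW_n(H)}\min M(p)$. Since $\cW_n(H)$ is the disjoint union over $u,v \in \cV{H}$ of the sets $\cW_n(H,u \to v)$, we can rewrite this as
\[
V_n(G,H,P)=\max_{u,v \in \cV{H}}\max_{f \in \cM_n(u \to v)}\min f .
\]
So it suffices to show, for each fixed pair $u,v$, that
\[
\max_{f \in \cM_n(u \to v)}\min f = \max_{f \in \cM_n^\dagger(u \to v)}\min f .
\]

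The inequality ``$\ge$'' is immediate, since $\cM_n^\dagger(u \to v)\subseteq \cM_n(u \to v)$. For ``$\le$'', take $f \in \cM_n(u \to v)$. The set $\cM_n(u \to v)$ is finite because $\cW_n(H,u\to v)$ is finite. Starting from $f$, we repeatedly replace the current element by a strictly $<$-larger element of $\cM_n(u\to v)$ whenever one exists. Because $<$ is a strict partial order and the set is finite, this chain must terminate at some $g \in \cM_n^\dagger(u\to v)$ with $f<g$ or $f=g$.

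The remaining step is to check that $\min$ is monotone along this chain: if $f<g$ pointwise, then $\min f\le \min g$. This holds even when the functions take the value $+\infty$ (a value $+\infty$ in $f$ forces the same value in $g$), and $\min$ is taken in $\Z_\infty$. Hence $\min f \le \min g \le \max_{h\in\cM_n^\dagger(u\to v)}\min h$, which gives ``$\le$''.

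There is no real obstacle here. The only point needing care is the convention for $+\infty$ entries: we must confirm that the order $<$ and the operation $\min$ still behave monotonically on $\Z_\infty$. Non-emptiness of $\cM_n^\dagger(u\to v)$ follows from finiteness whenever $\cW_n(H,u\to v)\neq\emptyset$. Pairs $(u,v)$ with $\cW_n(H,u\to v)=\emptyset$ contribute nothing to either side, so they can be ignored.
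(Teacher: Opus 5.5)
Your proof is correct and follows essentially the same route as the paper: both use \eqref{eq:V_G_H_via_M_p} to reduce to a fixed pair $(u,v)$, and both use finiteness of $\cM_n(u\to v)$ to dominate each non-maximal $f$ by some $f^\dagger\in\cM_n^\dagger(u\to v)$ with $f<f^\dagger$, so that $\min f\le\min f^\dagger$. One small correction: under the strict pointwise order, $f<g$ is impossible when $f$ has a $+\infty$ entry, so such an $f$ is automatically maximal (as noted in the proof of \Cref{lem:maximal_factorization}); your parenthetical about $+\infty$ entries is therefore vacuous rather than a point that needs checking.
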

\begin{proof}
From \eqref{eq:V_G_H_via_M_p} it follows directly that
\[
V_n(G,H,P) = \max_{u,v \in \cV{H}}\max_{f \in \cM_n(u \to v)} \min f.  
\]
Let $\cM_n^-(u \to v) := \cM_n(u \to v) \setminus \cM_n^\dagger(u \to v)$, then
\[
V_n(G,H,P) = \max_{u,v \in \cV{H}}\max \left\{ \max_{f \in \cM_n^-(u \to v)} \min f, \max_{f \in \cM_n^\dagger(u \to v)} \min f\right\}.  
\]
Because $\cM_n(u \to v)$ is a finite set, for every $f \in \cM_n^-(u \to v)$ there exists $f^\dagger \in \cM_n^\dagger(u \to v)$ such that $f < f^\dagger$, and in particular $\min f < \min f^\dagger$. It follows that  $\max_{f \in \cM_n^-(u \to v)} \min f < \max_{f \in \cM_n^\dagger(u \to v)} \min f$.
From this, the result follows directly.
\end{proof}

\begin{lemma}\label{lem:M_p_finite}
    Under the assumption that $G$ is primitive, there exists $n_0 \in \N$ such that for every $n \geq n_0$ and $p \in \cW_n(H)$ we have $M(p) \in \Z^{\cV{G} \times \cV{G}}$, meaning that the function $M(p)$ does not take the value $+\infty$.
     
\end{lemma}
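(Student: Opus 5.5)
The plan is to reduce the statement to a standard fact about primitive graphs: there exists $n_0$ such that for every $n \ge n_0$ and every pair of vertices $v,w \in \cV{G}$ the set $\cW_n(G, v \to w)$ is nonempty. Once this holds, note that $M(p, v \to w)$ is an infimum of a finite sum of integer values of $P$ over the finite set $\cW_n(G,v\to w)$. If that set is nonempty, the infimum is a minimum and hence an integer. Indeed it is bounded by $n\|P\|_\infty$ in absolute value, where $\|P\|_\infty := \max_{e,e'}|P(e,e')|$. So $M(p) \in \Z^{\cV{G}\times\cV{G}}$ for every $p \in \cW_n(H)$, for all $n \ge n_0$. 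Notice that $n_0$ depends only on $G$, not on $H$ or $p$, and no property of $H$ is needed.

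The main step is therefore to prove the nonemptiness claim. I would proceed in the usual way.

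First, fix a vertex $v_0\in\cV{G}$ and let $C$ be the set of lengths of cycles through $v_0$. This set is closed under addition, since cycles based at $v_0$ can be concatenated.

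Next, show that $\gcd C = 1$. By irreducibility, any cycle $c$ based at some vertex $u$ can be combined with walks $a$ from $v_0$ to $u$ and $b$ from $u$ to $v_0$. Then both $a*b$ and $a*c*b$ are cycles at $v_0$, so $\gcd C$ divides $|c|$. Hence $\gcd C$ divides the gcd of all cycle lengths, which is $1$ by primitivity.

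Then, by the elementary fact that an additive subsemigroup of $\N$ with gcd $1$ contains all sufficiently large integers (Frobenius / Schur), there is some $N$ such that every $m \ge N$ is the length of a cycle at $v_0$.

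Finally, for arbitrary $v,w$, choose walks from $v$ to $v_0$ and from $v_0$ to $w$, each of length at most $|\cV{G}|$, which is possible by irreducibility. Inserting a cycle at $v_0$ of any length $m\ge N$ between them gives walks from $v$ to $w$ of every length at least $2|\cV{G}|+N$. Taking $n_0 := 2|\cV{G}|+N$ then works uniformly over all $v,w$.

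The only nontrivial ingredient is the semigroup fact. If it is not taken as known, I would prove it as follows. By Bézout, pick finitely many elements of $C$ with gcd $1$, and write $1 = \sum a_i c_i - \sum b_j d_j$ with all $c_i, d_j \in C$ and nonnegative integer coefficients. Setting $s:=\sum b_j d_j \in C$ (assuming $s>0$; otherwise it is trivial), the numbers $k s + r(1)$ with $0 \le r < s$ cover every integer $\ge s^2$. Each of these lies in $C$, being a nonnegative combination of elements of $C$. This step is routine, so I expect no real obstacle. The proof is essentially the standard argument that a primitive adjacency matrix has a strictly positive power, which one could alternatively cite directly from \cite{LindMarcus}.
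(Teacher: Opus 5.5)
Your proposal is correct and follows the same route as the paper: reduce to the fact that a primitive graph has some $n_0$ with $\cW_n(G,v\to w)\neq\emptyset$ for all $v,w\in\cV{G}$ and all $n\ge n_0$, after which $M(p,v\to w)$ is a minimum over a finite nonempty set of integers. The only difference is that the paper simply cites this fact from \cite[Theorem 4.5.8]{LindMarcus}, whereas you reprove it directly from cycle lengths at a base vertex and the numerical-semigroup argument, which is also valid.
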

\begin{proof}
        It is well known (e.g., see~\cite[Theorem 4.5.8]{LindMarcus}) that an irreducible graph $G$ is primitive if and only if there exists $n_0 \in \N$ such that for any two vertices $v,v'\in \cV{G}$ and any $n \ge n_0$ there exists a walk of length $n$ from $v$ to $v'$.
          In other words, there exists $n_0$ such that for every $n \ge n_0$, we have $\cW_n(G,v \to w) \ne \emptyset$. Hence, for $p \in \cW(H)$ with $n \geq n_0$, we have that $M(p,v\to v')$ is a finite integer, so $M(p) \in \Z^{\cV{G} \times \cV{G}}$.
\end{proof}

As a direct consequence of \Cref{lem:star_trop_conv} and \Cref{lem:tropical_conv} we conclude that any segment of a non-improvable walk is also non-improvable:
\begin{lemma}\label{lem:non_improvable_subpath}
    For any $m,n \ge \N$ and any $p \in \cW_n(H)$ and $q \in \cW_m(H)$ if $p*q \in \cW_{n+m}^\dagger(H)$
    and $M(p*q)$ is $\Z$-valued,
    then necessarily $p \in \cW_{n}^\dagger(H)$ and $q \in \cW_m^\dagger(H)$.
\end{lemma}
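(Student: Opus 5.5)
We argue directly from the definitions, using \Cref{lem:tropical_conv} to translate concatenation of walks into tropical convolution and \Cref{lem:maximal_factorization} to pass maximality down to the factors. Write $u$ for the initial vertex of $p$, $v$ for its terminal vertex (which is the initial vertex of $q$), and $w$ for the terminal vertex of $q$. Then $p \in \cW_n(H,u\to v)$ and $q \in \cW_m(H,v \to w)$. The hypothesis $p*q \in \cW^\dagger_{n+m}(H)$ means precisely that $M(p*q)$ is $<$-maximal in $\cM_{n+m}(u \to w)$. We need $M(p) \in \cM_n^\dagger(u\to v)$ and $M(q) \in \cM_m^\dagger(v \to w)$.

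Set $A := \cM_n(u \to v)$ and $B := \cM_m(v \to w)$; both are finite because there are finitely many walks of a given length. By \Cref{lem:tropical_conv}, $M(p*q) = M(p)*M(q)$, and every element $f*g$ of $A*B$ has the form $M(p'*q')$ with $p'\in\cW_n(H,u\to v)$ and $q' \in \cW_m(H,v\to w)$. Hence $A*B \subseteq \cM_{n+m}(u \to w)$. The reverse inclusion can fail, since walks from $u$ to $w$ of length $n+m$ may pass through other vertices at time $n$, but we do not need it.

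Maximality in a larger set implies maximality in any subset containing the element. Since $M(p*q) \in A*B \subseteq \cM_{n+m}(u\to w)$, and no element of $\cM_{n+m}(u\to w)$ strictly dominates $M(p*q)$, in particular no element of $A*B$ does. So $M(p)*M(q) \in (A*B)^\dagger$.

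We are given that $M(p*q)$ is $\Z$-valued. Applying \Cref{lem:maximal_factorization} to $f = M(p) \in A$ and $g = M(q) \in B$ gives $M(p) \in A^\dagger$ and $M(q) \in B^\dagger$, which is exactly the claim.

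The only delicate point is the inclusion $A*B \subseteq \cM_{n+m}(u\to w)$ and the direction in which maximality transfers. We must not use \Cref{lem:star_trop_conv} with an equality of sets: that lemma also requires $\Z$-valued sets, and $A$ or $B$ may contain functions taking the value $+\infty$. Those are handled in \Cref{lem:maximal_factorization} by the remark that a function attaining $+\infty$ is trivially $<$-maximal.
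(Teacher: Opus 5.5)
Your proof is correct and follows the paper's route: you write $M(p*q)=M(p)*M(q)$ via \Cref{lem:tropical_conv} and apply \Cref{lem:maximal_factorization} to $A=\cM_n(u\to v)$ and $B=\cM_m(v\to w)$. You also spell out a step the paper leaves implicit, namely that $A*B\subseteq\cM_{n+m}(u\to w)$, so maximality in the larger set gives $M(p)*M(q)\in(A*B)^\dagger$.
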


\begin{proof}
By \Cref{lem:tropical_conv}, we have that 
\[M(p*q)=M(p)*M(q) \in \cM_n(H,u \to v)*\cM_m(H,v \to w).\]
The result follows from \Cref{lem:maximal_factorization}.
\end{proof}

For $p \in \cW_n(H)$ write
    \begin{align*}
         \max M(p)  & := \max_{v,w \in \cV{G}}M(p,v \to w) ,    & \min M(P) & := 
    \min_{v,w \in \cV{G}}M(p,v \to w).
    \end{align*}
   
\begin{lemma}\label{lem:M_n_recursion}
For every $m,n \in \N$ and $u,w \in \cV{H}$, the following identity holds:
\begin{equation}\label{eq:M_n_recursion}
\cM_{n+m}^\dagger(u \to w) = \left(\bigcup_{v \in \cV{H}}\cM_n^\dagger(u \to v)*\cM_m^\dagger(v \to  w)\right)^\dagger . 
\end{equation}

\end{lemma}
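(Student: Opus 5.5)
We reduce \eqref{eq:M_n_recursion} to \Cref{lem:star_trop_conv} through \Cref{lem:tropical_conv}, plus a small observation that $\dagger$ behaves well under unions.

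\textbf{Step 1: unfiltered decomposition.} By \eqref{eq:cW_n_plus_m}, every walk in $\cW_{n+m}(H,u\to w)$ can be written uniquely as $p*q$, where $p\in\cW_n(H,u\to v)$ and $q\in\cW_m(H,v\to w)$ for some $v\in\cV{H}$. Conversely, every such concatenation lies in $\cW_{n+m}(H,u\to w)$. Combining this with \Cref{lem:tropical_conv} gives
\[
\cM_{n+m}(u\to w)=\bigcup_{v\in\cV{H}}\cM_n(u\to v)*\cM_m(v\to w).
\]
Applying $\dagger$ to both sides, it remains to show
\[
\Bigl(\bigcup_v A_v*B_v\Bigr)^\dagger=\Bigl(\bigcup_v A_v^\dagger*B_v^\dagger\Bigr)^\dagger,
\]
where $A_v=\cM_n(u\to v)$ and $B_v=\cM_m(v\to w)$ are finite sets.

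\textbf{Step 2: the union version of \Cref{lem:star_trop_conv}.} Set $C=\bigcup_v A_v*B_v$ and $C'=\bigcup_v A_v^\dagger*B_v^\dagger\subseteq C$. I will use the general fact that for finite sets $C'\subseteq C$ with the property that every element of $C$ is dominated, in the weak sense, by some element of $C'$, we have $C^\dagger=(C')^\dagger$.

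First, $C^\dagger\subseteq C'$. Take $h\in C^\dagger$ with $h=f*g\in A_v*B_v$. Then $h$ is also maximal in the smaller set $A_v*B_v$. If $h$ is $\Z$-valued, \Cref{lem:maximal_factorization} gives $f\in A_v^\dagger$ and $g\in B_v^\dagger$, so $h\in C'$.

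Given $C^\dagger\subseteq C'\subseteq C$, it follows that $C^\dagger\subseteq (C')^\dagger$. For the reverse inclusion, let $h\in(C')^\dagger$ and suppose $h<k$ for some $k\in C$. By finiteness we may take $k$ maximal in $C$, so $k\in C^\dagger\subseteq C'$, which contradicts the maximality of $h$ in $C'$.

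\textbf{Main obstacle: $+\infty$ values.} The obstacle is the presence of the value $+\infty$. \Cref{lem:maximal_factorization} needs $f*g$ to be $\Z$-valued, and \Cref{lem:star_trop_conv} is stated only for $\Z$-valued functions.

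For $n,m\ge n_0$, \Cref{lem:M_p_finite} makes all the functions involved $\Z$-valued, so the argument above goes through.

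In general, the plan is to check directly how $<$ treats $+\infty$. Since $f<g$ requires strict inequality at every coordinate, any $f$ that attains $+\infty$ at some coordinate cannot be strictly below anything. Such an $f$ is therefore automatically maximal.

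So, in the case where $h=f*g\in C^\dagger$ takes the value $+\infty$, I would show directly that $f$ and $g$ can be replaced by maximal elements. If $f\notin A_v^\dagger$, then $f$ is $\Z$-valued and there is $f'\in A_v^\dagger$ with $f<f'$. Then $f*g\le f'*g$ pointwise, and at the infinite coordinates of $f*g$ we get $(f'*g)(x,y)=+\infty$, because $f$ and $f'$ are both finite there and so the infinity must come from $g$. This lets us swap in $f'$ and, similarly, a maximal $g'$, while preserving membership in $C^\dagger$. Concretely, $f'*g\ge h$ pointwise, so $h$ being maximal in $C$ forces $f'*g$ to be maximal as well.

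Once this is done, the identity $\cM_{n+m}^\dagger(u\to w)=(C')^\dagger$ holds for all $n,m$. If the $+\infty$ bookkeeping proves too delicate, I would state the lemma for $n,m\ge n_0$, which is the only range used later.
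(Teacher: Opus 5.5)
Your reduction follows the paper's route. Step 1 uses \eqref{eq:cW_n_plus_m} and \Cref{lem:tropical_conv}, and your Step 2 is what the paper obtains by combining \Cref{lem:star_trop_conv} with the identity $(\bigcup_j A_j^\dagger)^\dagger=(\bigcup_j A_j)^\dagger$. Step 2 is correct whenever every element of $C=\cM_{n+m}(u\to w)$ is $\Z$-valued.

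\textbf{The gap is your $+\infty$ paragraph.} To get $C^\dagger\subseteq C'$ you must show that $h$ \emph{itself} lies in $\bigcup_v A_v^\dagger*B_v^\dagger$. Replacing $f$ by some $f'>f$ produces a different element $f'*g$, which is strictly larger than $h$ at every coordinate where $h$ is finite. So knowing that $f'*g\in C^\dagger$ tells you nothing about whether $h\in C'$.

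In fact no argument that uses only the abstract sets can work. Take $E=\{1,2\}$ and $A=\{f,f'\}$ with $f\equiv 0$ and $f'\equiv 1$. Take $B=\{g\}$ with $g(z,1)=0$ and $g(z,2)=+\infty$ for all $z$. Both $f*g$ and $f'*g$ attain $+\infty$, so both are maximal. Hence
\[
(A*B)^\dagger=\{f*g,\,f'*g\}\neq\{f'*g\}=(A^\dagger*B^\dagger)^\dagger .
\]
Your fallback does not rescue this either. Restricting to $n,m\ge n_0$ is not ``the only range used later'': \Cref{lem:M_star_periodic} builds $\Psi$ from the recursion with $m=1$.

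\textbf{The missing idea is that the $+\infty$ pattern of $M(p)$ depends only on the length.} For $p\in\cW_k(H)$, we have $M(p,v\to w)=+\infty$ if and only if $\cW_k(G,v\to w)=\emptyset$. Let $I_k$ be the set of such pairs $(v,w)$. Every vertex of the primitive graph $G$ has an incoming and an outgoing edge. Hence $I_n=\emptyset$ or $I_m=\emptyset$ forces $I_{n+m}=\emptyset$: extend a length-$m$ walk backwards from $w$, or a length-$n$ walk forwards from $v$, and connect. This gives a dichotomy:
\begin{enumerate}
\item If $I_{n+m}=\emptyset$, all of $C$ is $\Z$-valued and your Step 2 applies verbatim. \Cref{lem:maximal_factorization} only needs $f*g$ to be finite, so this case covers every $n+m\ge n_0$, including the $m=1$ application.
\item If $I_{n+m}\neq\emptyset$, then $I_n\neq\emptyset$ and $I_m\neq\emptyset$. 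Every element of $\cM_n(u\to v)$, $\cM_m(v\to w)$ and $\cM_{n+m}(u\to w)$ then attains $+\infty$ and is automatically maximal. All daggers act as the identity, and both sides equal the union from Step 1.
\end{enumerate}
The paper's own proof applies \Cref{lem:star_trop_conv}, which is stated only for $\Z$-valued sets, without comment. So the obstacle you flagged is real, and this dichotomy is what closes it.
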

\begin{proof}

It is immediate that
\[
\cM^\dagger_{n+m}(u \to w) = \{ M(p) ~:~  p \in \cW_{n+m}(H,u \to w)\}^\dagger.
\]
Using \eqref{eq:cW_n_plus_m} we thus have
\[
\cM^\dagger_{n+m}(u \to w) = \left(\bigcup_{v \in \cV{H}}\{ M(q*q') ~:~ q \in \cW_n(H,u \to v),~ q' \in \cW_m(H,v \to w)\}\right)^\dagger.
\]
So using \Cref{lem:tropical_conv} and \Cref{lem:star_trop_conv}: 
\[
\cM^\dagger_{n+m}(u \to w) = \left(\bigcup_{v \in \cV{H}}\left(\cM_n^\dagger(u \to v) * \cM_m^\dagger(v \to w) \right)^\dagger\right)^\dagger.
\]
Finally, we conclude \eqref{eq:M_n_recursion} using the easily verified fact that for any finite collection of sets $A_1,\ldots,A_r \subseteq \Z_\infty^{\cV{H}\times \cV{H}}$, we have that
\[ \left(\bigcup_{j=1}^r A_j^\dagger\right)^\dagger = \left( \bigcup_{j=1}^r A_j \right)^\dagger.\]
\end{proof}

\begin{lemma}\label{lem:M_p_variation_bounded}
    There exists $N \in \N$, and $C_0>0$ depending only on the graph $G$ such that for any $n >N$ and  $p \in \cW_n(H)$ 
we have:
    \begin{equation}\label{eq:M_p_variation_bound}
    \Delta M(p) \le C_0\|P\|.    
    \end{equation}
    
\end{lemma}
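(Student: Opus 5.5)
We want to bound $\Delta M(p)$ uniformly for long walks $p$. The plan is to split $p$ into three pieces and apply \Cref{lem:3_prodcuct_delta}, after checking that the outer pieces have $M$ bounded by a multiple of $\|P\|$.

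\emph{The cut.} Let $n_0$ be given by \Cref{lem:M_p_finite}, so that $\cW_k(G,v\to w)\neq\emptyset$ for all $k\ge n_0$ and all $v,w\in\cV{G}$. Set $N:=3n_0$. Given $n>N$ and $p\in\cW_n(H)$, write
\[
p=p^{(1)}*p^{(2)}*p^{(3)},
\]
where $p^{(1)}$ has length $n_0$, $p^{(3)}$ has length $n_0$, and $p^{(2)}$ has length $n-2n_0\ge n_0$. This decomposition is always possible, since consecutive edges of $p$ match up. By \Cref{lem:tropical_conv},
\[
M(p)=M(p^{(1)})*M(p^{(2)})*M(p^{(3)}).
\]
By \Cref{lem:M_p_finite}, all three factors are $\Z$-valued.

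\emph{Bounding the outer factors.} For $p'$ of length exactly $n_0$, each entry $M(p',v\to w)$ is a minimum over a nonempty set of sums of $n_0$ terms, each of absolute value at most $\|P\|:=\max|P|$. Hence
\[
\|M(p')\|\le n_0\|P\|.
\]
Applying \Cref{lem:3_prodcuct_delta} with $f_1=M(p^{(1)})$, $f_2=M(p^{(2)})$ and $f_3=M(p^{(3)})$ gives
\[
\Delta M(p)\le 2n_0\|P\|+2n_0\|P\|=4n_0\|P\|.
\]
So the statement holds with $C_0=4n_0$, which depends only on $G$.

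\emph{Where care is needed.} The one step that needs attention is that \Cref{lem:3_prodcuct_delta} requires $\Z$-valued functions. This is exactly why all three pieces must have length at least $n_0$, including the middle one: if the middle factor took the value $+\infty$, the lemma's inequality $\min\phi-\min\psi\le\min(\phi-\psi)$ could fail. The printed proof of that lemma has a typo, writing $\|f_2\|$ where $\|f_3\|$ is meant, but its statement is the correct one, and the argument only uses the first and third factors. With the lengths chosen as above, there is no further obstacle.
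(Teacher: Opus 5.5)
Your proof is correct and is essentially the paper's argument: cut off initial and terminal segments of length $n_0$, use \Cref{lem:tropical_conv} to write $M(p)$ as a triple tropical product, bound the outer factors by $n_0\|P\|$, and apply \Cref{lem:3_prodcuct_delta} to get $C_0=4n_0$. Your choice $N=3n_0$ is slightly cleaner than the paper's $N>2n_0$, because it makes the middle factor $\Z$-valued as \Cref{lem:3_prodcuct_delta} formally requires; with $N>2n_0$ the inequality does not actually fail, though (contrary to your remark), since one can restrict the minimum to the nonempty set of pairs where $M(p'')$ is finite, so your choice is a convenience rather than a necessity.
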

\begin{proof}
    By \Cref{lem:M_p_finite} there exists $n_0 \in \N$ so that
    for any $n \ge n_0$ and  any $p \in \cW_{n}(H)$,  $M(p)$ takes only values in $\Z$. Moreover, for any $n \geq n_0$ and $p \in \cW_n(H)$ and $v,w \in \cV{G}$ we have
    \[
   |M(p,v \to w)| \le n\|P\|.
    \]
    
    Now choose any  $N > 2n_0$, $n \ge N$ and  $p \in \cW(H)$. Then we can write $p=p'*p''*p'''$ with $p',p''' \in \cW_{n_0}(H)$ and $p'' \in \cW_{n-2n_0}(H)$.
    
    By \Cref{lem:tropical_conv} 
    \[
    M(p)=M(p')*M(p'')*M(p'').
    \]
    Because $p',p''' \in \cW_{n_0}(H)$, we have that $\left|M\left(p'\right)\right|,\left|M\left(p'''\right)\right| < n_0 \|P\|$. Applying \Cref{lem:3_prodcuct_delta} with $f=M(p),f_i=M\left(p^{(i)}\right)$ for $i = 1, 2, 3$,
    we get \eqref{eq:M_p_variation_bound} with $C=4n_0 \|P\|$.
\end{proof}

\begin{lemma}\label{lemma:non_imp_almost_optimal}
        There exist $N,C \in \N$ so that for any $n \ge N$ and any $u,v \in \cV{H}$
        \[
        \overline{\cM}_n^\dagger(u \to v) \subseteq \{-C,\ldots,C\}^{\cV{G} \times \cV{G}}.
        \]
\end{lemma}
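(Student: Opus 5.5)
Fix $n \ge N$, where $N$ is the constant of \Cref{lem:M_p_variation_bounded}; by \Cref{lem:M_p_finite} we may also take $N \ge n_0$, so every $M(p)$ with $p\in\cW_n(H)$ is $\Z$-valued. The claim is that every $f \in \cM_n^\dagger(u \to v)$ satisfies $|f(x,y) - V_n(G,H,P)| \le C$ for all $x,y\in\cV{G}$, with $C$ independent of $n,u,v$. The upper bound is a short comparison with $\min f$; the lower bound is where primitivity of $H$ and non-improvability of $f$ are actually used.

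\textbf{Upper bound.} Take $f = M(p)$ with $p\in\cW_n(H,u\to v)$. By \eqref{eq:V_G_H_via_M_p}, $\min f \le V_n(G,H,P)$. \Cref{lem:M_p_variation_bounded} gives $\Delta f \le C_0\|P\|$, hence
\[
f(x,y) \le \min f + C_0\|P\| \le V_n(G,H,P) + C_0\|P\|.
\]
This holds for every $p$, not only non-improvable ones.

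\textbf{Lower bound.} I argue by contradiction, using maximality of $f$ in $\cM_n(u \to v)$. Since $H$ is primitive, there is $k_0$ such that any two vertices of $H$ are joined by walks of every length $\ge k_0$. Let $p^*\in\cW_n(H)$ be an optimal walk, so that $\min M(p^*) = V_n(G,H,P)$. I build a competitor $\tilde p \in \cW_n(H,u\to v)$ as follows:
\begin{enumerate}[label=(\roman*)]
\item take a connecting walk $a$ of length $k_0$ from $u$ to the vertex $s(p^*_{k_0+1})$;
\item follow the middle segment $p^*_{[k_0+1,\,n-k_0]}$ of $p^*$;
\item take a connecting walk $b$ of length $k_0$ from the end of that segment to $v$.
\end{enumerate}

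Next I show $\min M(\tilde p) \ge V_n(G,H,P) - C'$ for a constant $C'$ independent of $n$. Write $p^* = p_1 * p_2 * p_3$ with $p_1,p_3$ of length $k_0$. Each $M(\cdot)$ value changes by at most $k_0\|P\|$ per replaced length-$k_0$ piece, because $M(g_1*h*g_2)$ and $M(g_1'*h*g_2')$ differ entrywise by at most $\|M(g_1)-M(g_1')\| + \|M(g_2)-M(g_2')\|$. Hence
\[
\min M(\tilde p) \ge V_n(G,H,P) - 4k_0\|P\|.
\]
The entries of $M(g)$ for $g$ of length $k_0$ are finite once $k_0\ge n_0$, which I can arrange by enlarging $k_0$. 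Combining with the variation bound, every entry satisfies
\[
M(\tilde p) \ge V_n(G,H,P) - 4k_0\|P\|.
\]

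Now suppose some entry of $f$ is below $V_n(G,H,P) - C$. Then $\max f < V_n(G,H,P) - C + C_0\|P\|$. Choosing $C > C_0\|P\| + 4k_0\|P\|$ gives $f < M(\tilde p)$ entrywise. Since $\tilde p\in \cW_n(H,u\to v)$, this contradicts $f \in \cM_n^\dagger(u\to v)$. The same choice of $C$ also covers the upper bound, so we can set $C := C_0\|P\| + 4k_0\|P\| + 1$, rounded up to an integer.

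\textbf{Main obstacle.} The delicate step is the perturbation estimate: replacing the end pieces of $p^*$ must change the tropical convolution by only a bounded amount. I would prove it directly from the definition of $*$, using that $\min$ and $+$ are $1$-Lipschitz in the sup norm. I also need to check that $n$ is large enough ($n \ge 2k_0 + n_0$) so that the middle segment exists and all functions involved are $\Z$-valued; this is what determines $N$.
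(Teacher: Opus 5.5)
Your proof is correct and follows essentially the same strategy as the paper. Both build a competitor in $\cW_n(H,u\to v)$ from an optimal walk by splicing connecting walks onto both ends, using primitivity of $H$. Both then combine strict entrywise domination with the variation bound $\Delta M(p)\le C_0\|P\|$ to contradict non-improvability.

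The differences are minor. You do the surgery on an optimal walk of length $n$, so you normalize directly by $V_n(G,H,P)$, as the statement requires; the paper pads an optimal walk of length $n-2n_0$ and bounds relative to $V_{n-2n_0}(G,H,P)$. Your upper bound follows at once from $\min f\le V_n(G,H,P)$ and the variation lemma. One small correction: each replaced end piece can shift entries by up to $2k_0\|P\|$, not $k_0\|P\|$, and that is what your total of $4k_0\|P\|$ actually uses.
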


\begin{proof}
By \Cref{lem:M_p_finite} there exists $n_0 \in \N$ so that for every $n \ge n_0$ and $p \in \cW_n(H)$, we have that
$M(p)$ is $\Z$-valued. Because the graph $H$ is also primitive, we can choose $n_0$ sufficiently big so that for any $n \ge n_0$, any two vertices in $H$ are connected by a walk of length $n$.

The proof of the lemma will be completed once we  prove that for $N := 3n_0 +1$ and some $C>0$ we have that for any $n  \ge N$, $p \in \cW_n^\dagger(H)$ and  $v,w \in \cV{G}$ we have
\begin{equation}\label{eq:M_minus_V_bounded}
-C \le M(p, v\to w) - V_{n-2n_0}(G,H,P) \le C.    
\end{equation}

Fix $n \ge N$.
There exist some $v^*,w^* \in \cV{H}$ such that $p^* \in \cW_{n-2n_0}(H,v^*,w^*)$.
Choose any $n \ge N$,  $v_0,w_0 \in \cV{H}$, $p \in \cW_n^\dagger(H, v_0 \to w_0)$  and $v,w \in \cW(G)$.
Let
\begin{align*}
p'  & = (p_1,\ldots,p_{n_0}) \in \cW_{n_0}(H, v_0 \to v_1) , \\
p'' & =(p_{n_0+1},\ldots,p_{n-n_0}) \in \cW_{n - 2n_0}(H, v_1 \to w_1), \\
p'''    & =(p_{n-n_0+1},\ldots ,p_n) \in \cW_{n_0}(H, w_1 \to w_0) ,
\end{align*}
where $v_1 = t\left(p_{n_0}\right) = s\left(p_{n_0+1}\right)$ and $w_1=t\left(p_{n-n_0}\right)=s\left(p_{n-n_0+1}\right)$.

Then by definition $p=p'*p''*p'''$, where
$p',p''' \in \cW_{n_0}(H)$ are the initial and terminal segments of length $n_0$ and $p'' \in \cW_{n-2n_0}(H)$ is the ``middle segment'' of $p$.
Find $v^*,w^* \in \cV{G}$ such that $(v^*,w^*)$ is a minimizer of $M\left(p''\right)$, i.e.
\[
M\left(p'',v^* \to w^*\right) =\min  M\left(p''\right).
\]
By definition of $M(p,v \to w)$, we have 
\[
M(p, v \to w) = \min_{q \in \cW_n(G,v \to w)}\sum_{i=1}^n P(p_i,q_i).
\]
Denote
\[
A(v,v^*)= \min_{q' \in \cW_{n_0}(G,v \to v^*)}\sum_{j=1}^{n_0} P(p_i,q'_i),
\]
\[
B(v^*,w^*) = \min_{q'' \in \cW_{n-2n_0}(G,v^* \to w^*)}\sum_{j=1}^{n-2n_0} P(p_{i+n_0},q''_i),
\]
and 
\[
C(w^*,w) = \min_{q''' \in \cW_{n_0}(G,w^* \to w)}\sum_{j=1}^{n_0} P(p_{n-n_0+i},q'''_i).
\]
Since 
\[
\cW_{n_0}(G,v \to v^*)*\cW_{n-2n_0}(G,v^* \to w^*)*\cW_{n_0}(G,w^* \to w) \subseteq \cW_n(G,v \to w) ,
\]
it follows that
\[
M(p,v \to w) \le A(v,v^*)+B(v^*,w^*)+C(w^*,w).
\]
Since $A(v,v^*) \le n_0\|P\|$, $B(v,w^*) = M(p'',v^* \to w^*) = \min M(p'')$ and $C(w^*,w) \le n_0\|P\|$, we have that:
By taking the minimum on the right hand side only over walks $q \in \cW_n(G,v \to w)$ for which $s\left(q_{n_0 + 1}\right) = v^*, s \left( q_{n - n_0 + 1}\right) = w^*$, we see that
\[
M(p, v \to w) \le 2n_0\|P\|+\min M\left(p''\right).
\]
By \eqref{eq:V_G_H_via_M_p}, we have that $\min M\left(p''\right) \le V_{n-2n_0}(G,H,P)$, so altogether we conclude that
\begin{equation}\label{eq:M_p_upper_bound}
M(p, v \to w) \le 2n_0\|P\|+V_{n-2n_0}(G,H,P).    
\end{equation}

By \eqref{eq:V_G_H_via_M_p}, there exists $v_2,w_2 \in \cV{H}$ and $p^* \in \cW_{n-2n_0}(H,v_2 \to w_2)$ such that
\[
V_{n-2n_0}(G,H,P)=\min M(p^*).
\]
Since we chose $n_0$ to be sufficiently large, there exist $p^a \in \cW_{n_0}(H,v_0 \to v_2)$ and $ p^b \in \cW_{n_0}(H,w_2 \to w_0)$.
Let $\tilde p = p^a p^* p^b \in \cW_{n}(H,v_0 \to w_0)$. Because $p \in \cW^\dagger_n(H,v_0 \to w_0)$, it follows that there exist $v',w' \in \cV{G}$ so that

\[
M(\tilde p,v'\to w') \le M(p,v' \to w').
\]
On the other hand we have that
\[
M(p,v' \to w') \le M(p,v \to w) + \Delta M(p)\le M(p,v \to w) +  C_0\|P\|,
\]
where $C_0>0$ is a constant as  in \eqref{eq:M_p_variation_bound} in \Cref{lem:M_p_variation_bounded}.
Because $\tilde p=p^a*p^**p^b$ we have
\[M(\tilde p,v' \to w') \ge \min p^* - 2n_0\|P\| = V_{n-2n_0}(G,H,P) - 2 n_0 \|P\|.\]
Combining the above inequalities, we see that 
\begin{equation}\label{eq:M_p_lower_bound}
M(p, v\to w) \ge V_{n-2n_0}(G,H,P) - \left(2n_0+C_0\right)\|P\|.    
\end{equation}

Combining \eqref{eq:M_p_upper_bound} and \eqref{eq:M_p_lower_bound} we conclude that \eqref{eq:M_minus_V_bounded} holds with $C > \left(2n_0+C_0\right)\|P\|$.
\end{proof}

\begin{lemma}\label{lem:M_star_periodic}
    There exist $n_1,k \in \N$ and $V^\dagger \in \Z$ such that
    \begin{equation}\label{eq:M_n_star_ev_periodic}
    \forall v,w \in \cV{H}~ \forall n \geq n_1 ~ \left[ \cM_{n+k}^\dagger (v \to w)  = \cM_{n}^\dagger (v \to w) + V^\dagger \right].     
    \end{equation}
    and
    \begin{equation}\label{eq:V_n_eventually_periodic}
    V_{n+k}(G,H,P)=V_n(G,H,P)+V^\dagger.    
    \end{equation}
    Furthermore, if  $k \in \N$ and $V^\dagger \in \Z$ satisfy \eqref{eq:M_n_star_ev_periodic}, then
    \begin{equation}\label{eq:V_V_star_over_k}
      V(G,H,P) = \frac{1}{k}V^\dagger.
    \end{equation}
\end{lemma}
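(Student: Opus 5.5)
We will use a pigeonhole argument on the normalized sets $\overline{\cM}_n^\dagger(v\to w)$, combined with the recursion of \Cref{lem:M_n_recursion}. By \Cref{lemma:non_imp_almost_optimal}, for $n\ge N$ every $\overline{\cM}_n^\dagger(v\to w)$ is a subset of the finite set $\{-C,\dots,C\}^{\cV{G}\times\cV{G}}$. Consequently the tuple
\[
T_n := \left(\overline{\cM}_n^\dagger(v\to w)\right)_{v,w\in\cV{H}}
\]
takes only finitely many values as $n$ ranges over $n\ge N$.

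The key step is to show that $\cM_{n+1}^\dagger$ is determined by $\cM_n^\dagger$ up to a common additive shift. Apply \Cref{lem:M_n_recursion} with $m=1$:
\[
\cM_{n+1}^\dagger(u\to w)=\Big(\bigcup_{v}\cM_n^\dagger(u\to v)*\cM_1^\dagger(v\to w)\Big)^\dagger .
\]
The operation $A\mapsto A^\dagger$ commutes with adding a constant, and \eqref{eq:set_conv_plus_const} gives $(A+c)*B=(A*B)+c$. Hence if $\cM_n^\dagger=\cM_{n'}^\dagger+c$ for every pair $(v,w)$ with the same constant $c$, then $\cM_{n+1}^\dagger=\cM_{n'+1}^\dagger+c$.

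By pigeonhole there are $N\le n_1<n_1+k$ with $T_{n_1}=T_{n_1+k}$. This means $\cM_{n_1+k}^\dagger(v\to w)=\cM_{n_1}^\dagger(v\to w)+V^\dagger$ for all $v,w$, with $V^\dagger:=V_{n_1+k}-V_{n_1}\in\Z$. Induction using the previous step then gives \eqref{eq:M_n_star_ev_periodic} for all $n\ge n_1$.

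One technical point needs care: \eqref{eq:set_conv_plus_const} was stated for $\Z$-valued functions, while $\cM_1^\dagger$ may take the value $+\infty$. The identity $(f+c)*g=(f*g)+c$ still holds when $f$ is $\Z$-valued and $g$ is $\Z_\infty$-valued. Alternatively one can avoid the issue by using $m=n_0$ from \Cref{lem:M_p_finite} and shifting in steps of $n_0$. This only requires the pigeonhole to choose indices in a single residue class mod $n_0$; I expect this bookkeeping to be the main (minor) obstacle.

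Equation \eqref{eq:V_n_eventually_periodic} then follows from \Cref{lem:V_max_M_star}, because $\min(f+V^\dagger)=\min f+V^\dagger$.

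For \eqref{eq:V_V_star_over_k}, suppose $k,V^\dagger$ satisfy \eqref{eq:M_n_star_ev_periodic} for $n\ge n_1$. By \Cref{lem:V_max_M_star}, $V_{n+jk}=V_n+jV^\dagger$ for all $j\ge 0$ and $n\ge n_1$. Writing any large $m$ as $m=n+jk$ with $n_1\le n<n_1+k$, the values $V_n$ form a bounded finite set. Therefore $V_m/m\to V^\dagger/k$, which also shows that the limit in \eqref{eq:V_def} exists.
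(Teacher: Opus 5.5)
Your proposal is correct and follows the paper's proof. You pigeonhole on the normalized tuples $\bigl(\overline{\cM}_n^\dagger(v\to w)\bigr)_{v,w}$, which range over a finite set by \Cref{lemma:non_imp_almost_optimal}. You then propagate via \Cref{lem:M_n_recursion} with $m=1$; your transport of a uniform additive shift from $(n,n')$ to $(n+1,n'+1)$ is exactly the paper's map $\Psi$ on normalized tuples. You finish with \Cref{lem:V_max_M_star} and the limit along residue classes mod $k$.

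Your observation that $(f+c)*g=(f*g)+c$ survives $+\infty$ entries in $\cM_1^\dagger$ settles a point the paper passes over silently. So the $m=n_0$ fallback is unnecessary; as sketched, it would anyway need an extra argument to produce a single constant $V^\dagger$ across residue classes mod $n_0$.
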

\begin{proof}
    Let $\mathcal{P}(\{-C,\ldots,C\}^{\cV{G}\times\cV{G}})$ denote the collection of subsets of functions on $\cV{G}\times\cV{G}$ that take values in $\{-C,\ldots,C\}$.
    By \Cref{lemma:non_imp_almost_optimal}, there exist $N \in \N$ and $C \in \N$ such that for any $n \ge N$  and $u,v \in \cV{H}$, we have that
    $(\overline{\cM}_n^\dagger(u \to v))_{(u,v) \in \cV{H} \times \cV{H}} \in \mathcal{P}(\{-C,\ldots,C\}^{\cV{G}\times\cV{G}})^{\cV{H}\times \cV{H}}$.
    Since the set 
    \begin{equation*}\label{eq:S_G_H_def}
    S_{G,H}:= \mathcal{P}(\{-C,\ldots,C\}^{\cV{G}\times\cV{G}})^{\cV{H}\times \cV{H}}
    \end{equation*}
    is finite, by the pigeonhole principle, there exists $N_1 , N_2 \in \N$  with $N \leq N_1 < N_2$ such that 
    \begin{equation}\label{eq:M_N_1_eq_M_N_2}
    \left(\overline{\cM}_{N_1}^\dagger(u \to v)\right)_{(u,v) \in \cV{H} \times \cV{H}}= \left(\overline{\cM}_{N_2}^\dagger(u \to v)\right)_{(u,v) \in \cV{H} \times \cV{H}}.
    \end{equation}
    Let $k=N_2-N_1$. By \Cref{lem:M_n_recursion}, together with \eqref{eq:set_conv_plus_const}, we see that there exists a function $\Psi:S_{G,H} \to S_{G,H}$ such that for every $n \ge N$, we have
    \[
    \left(\overline{\cM}_{n+1}^\dagger(u \to v)\right)_{(u,v) \in \cV{H} \times \cV{H}}=
    \Psi\left( \left(\overline{\cM}_{n}^\dagger(u \to v)\right)_{(u,v) \in \cV{H} \times \cV{H}}\right).
    \]
    Hence by \eqref{eq:M_N_1_eq_M_N_2} we see that for every $n \ge N_1$
    \[
     \left(\overline{\cM}_{n+k}^\dagger(u \to v)\right)_{(u,v) \in \cV{H} \times \cV{H}} = \left(\overline{\cM}_{n}^\dagger(u \to v)\right)_{(u,v) \in \cV{H} \times \cV{H}}.
    \]
    By definition 
    \[
    \cM_n(u \to v) = \overline{\cM}_{n}^\dagger(u \to v) +V_n(G,H,P),
    \]
    This means that there exists $V^\dagger \in \Z$ such  that  \eqref{eq:M_n_star_ev_periodic} and \eqref{eq:V_n_eventually_periodic} hold for every  $n \ge N_1$.
    From \eqref{eq:V_n_eventually_periodic} we see that for every $n \ge N_1$ and $\ell \in \N$
    \[
    V_{n+\ell k}(G,H,P) = V_n(G,H,P) + \ell V^\dagger.
    \]
    Dividing by $n+\ell k$ and taking $\ell \to \infty$ we see that $\lim_{m \to \infty}\frac{1}{m}V_m(G,H,P)= \frac{1}{k}V^\dagger$, which proves \eqref{eq:V_V_star_over_k}.
\end{proof}

\begin{proof}[Proof of \Cref{thm:game_value_rational_computable}]
This is immediate from our proof of \Cref{lem:M_star_periodic}. In particular, we can compute $\overline{\cM}_n^\dagger$ for $n > N$ until we find $n_2 > n_1 > N$ such that $\overline{\cM}_{n_1}^\dagger = \overline{\cM}_{n_2}^\dagger$, and take $V^\dagger = V_{n_2}(G, H, P) - V_{n_1}(G, H, P)$.

\end{proof}

\section{The space of non-improvable paths}
\label{sec:non_improvable_paths}

As in \cite{meyerovitch2025nonalternatingmeanpayoffgames}, the value of  $V(G,H,P)$ can be regarded as the Nash equilibrium payoff in the following
two-player game:
\begin{itemize}
    \item Alice chooses a bi-infinite walk $p=(p_n)_{n \in \Z}$ in the graph $H$
    \item Bob observes the walk $p$ chosen by Alice and chooses a bi-infinite walk $q=(q_n)_{n \in \Z}$ in $G$.
    \item Bob pays Alice $\liminf_{n \to \infty}\frac{1}{n}\sum_{j=1}^nP(q_j,p_j)$.
\end{itemize}
In this section we will investigate the space of ``optimal strategies'' for Alice and Bob in a certain sense.

Given a graph $G$, we define 
    \[
    X_G := \{ x\in \cE{H}^\Z~:~ \forall n \in \Z \; \left( t(x_n)=s(x_{n+1})\right)\}.
    \]
to be the \emph{edge shift} corresponding to $G$ (c.f. \cite[Definition 2.2.5]{LindMarcus}). The shift space $X_G$ is a shift of finite type, and in particular sofic.

The quantity $V(G,H,P)$ defined by \eqref{eq:V_def} is the Nash equilibrium value of the game in the sense that it is the maximal payoff that Alice can guarantee regardless of Bob's strategy, and also the minimal payoff that Bob can  guarantee regardless of Alice's strategy.
The following result, 
which is a particular case of \cite[Theorem 2.10]{young2025adversarialergodicoptimization}, shows how to  express the value $V(G,H,P)$ using shift-invariant probability measures on $X_H$ and $X_G$. 
The following result follows directly from \cite[Theorem 2.10]{young2025adversarialergodicoptimization}:
\begin{thm}\label{thm:V_G_H_P_via_measures}
Let $G,H$ be primitive graphs and $P:\cE{G} \times \cE{H} \to \Z$. Then 
    \begin{equation}
        V(G,H,P) = \sup_{\mu \in \mathrm{Prob}_\sigma(X_H)}\inf_{\lambda \in \Prob_\sigma(X_H \times X_G) \cap (\pi_H)_*^{-1}(\{\mu\})}\int P(x_0,y_0)d\lambda(x,y),
    \end{equation}
where $\Prob_\sigma(X_H)$ (resp. $\Prob_\sigma(X_H \times X_G)$) denotes the family of shift-invariant Borel probability measures on $X_H$ (resp. $X_H \times X_G$).
\end{thm}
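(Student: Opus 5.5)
We prove the two inequalities separately, using the finite-horizon characterization $V(G,H,P)=\lim_n \frac1n V_n(G,H,P)$ from \eqref{eq:V_def} and the eventual periodicity of \Cref{lem:M_star_periodic}. Write $V^{\mathrm{meas}}$ for the right-hand side of the statement.

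\emph{Lower bound $V^{\mathrm{meas}}\ge V(G,H,P)$.} For each $n$ pick a maximizer $p^{(n)}\in\cW_n(H)$ in \eqref{eq:V_G_H_via_M_p}, so $\min M(p^{(n)})=V_n(G,H,P)$. Since $H$ is primitive, \Cref{lem:M_p_finite} (applied to $H$) provides walks of length $n_0$ joining any two vertices. Using them, close $p^{(n)}$ into a cycle $c^{(n)}$ of length $n+n_0$. Let $\mu_n\in\Prob_\sigma(X_H)$ be the periodic orbit measure of the bi-infinite periodic point $\cdots c^{(n)}c^{(n)}\cdots$.

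Now take any joining $\lambda$ with $H$-marginal $\mu_n$. Then $\int P(x_0,y_0)\,d\lambda$ is an average over $\lambda$-typical points of Birkhoff averages. Along a $\lambda$-typical point $(x,y)$, $x$ is a shift of the periodic point. Cutting $y$ into consecutive blocks aligned with the copies of $c^{(n)}$, each block is a $G$-walk paired against a copy of $p^{(n)}$ plus $n_0$ extra symbols. Its payoff is therefore at least $V_n-n_0\|P\|$, because $M(p^{(n)},v\to w)\ge\min M(p^{(n)})$ for all endpoints. Hence
\[
\int P\,d\lambda\ge \frac{V_n-n_0\|P\|}{n+n_0}.
\]
Taking the infimum over $\lambda$, then the supremum over $\mu$, and letting $n\to\infty$ gives the bound.

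\emph{Upper bound $V^{\mathrm{meas}}\le V(G,H,P)$.} Fix $\mu$. By ergodic decomposition and affineness of the objective we would like to reduce to ergodic $\mu$. This is delicate because the infimum over joinings is not obviously affine. I would first try to build $\lambda$ directly for general $\mu$ via a measurable selection of Bob's responses. Given $x\in X_H$, for each block $x_{[jm,(j+1)m)}$ choose a $G$-walk achieving $\min_{q}\sum P(q_i,x_i)$, which is at most $\min M\le V_m$. Glue consecutive blocks using connecting walks of length $n_0$ in $G$ (primitivity of $G$), at cost $O(n_0\|P\|)$ per block. Do this by a deterministic, measurable rule, for instance lexicographically least minimizers.

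This gives a map $x\mapsto y=\Phi_m(x)$ that is equivariant only under $\sigma^{m}$. Average the pushforward of $\mu$ under $x\mapsto(x,\Phi_m(x))$ over the shifts $\sigma^{i}$, $0\le i<m$, using the fact that $\mu$ is $\sigma$-invariant. The result is a shift-invariant joining $\lambda_m$ with marginal $\mu$ and
\[
\int P\,d\lambda_m\le \frac{V_m+O(n_0\|P\|)}{m}.
\]
One must check that the averaged measure still has $H$-marginal $\mu$; this holds since each $\sigma^i$ preserves $\mu$. Letting $m\to\infty$ gives $\inf_\lambda\int P\,d\lambda\le V(G,H,P)$ uniformly in $\mu$.

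\emph{Main obstacle.} The main obstacle is the upper bound, specifically keeping the glued walk a legal walk in $G$. The block minimizer cannot be used verbatim, since it forces endpoints; instead it must be shortened by $n_0$ symbols so a connector fits. I would handle this by using blocks of length $m$ in which the last $n_0$ symbols are reserved for the connector. The connector costs at most $n_0\|P\|$, and the minimum over the first $m-n_0$ positions is at most $V_{m-n_0}$, which still yields $V_{m}/m+o(1)$. A secondary technical point is measurability of the selection, which is routine since all choices are made from finite sets by a fixed order.
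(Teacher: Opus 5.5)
Your argument is correct, and it takes a genuinely different route from the paper. The paper gives no proof of this statement; it imports it as a special case of \cite[Theorem 2.10]{young2025adversarialergodicoptimization}. You instead prove both inequalities directly from the finite-horizon values $V_n$. For the lower bound you use periodic orbit measures of closed-up maximizers $p^{(n)}$. Every joining with such a marginal pays at least $(V_n-n_0\|P\|)/(n+n_0)$, since each aligned $G$-block costs at least $\min M(p^{(n)})=V_n$ on the $p^{(n)}$ part. For the upper bound you use a $\sigma^m$-equivariant block map $\Phi_m$, built from minimizers on $m-n_0$ sites plus length-$n_0$ connectors in $G$. Averaging its graph measure over $\sigma^i$, $0\le i<m$, gives a shift-invariant joining with marginal exactly $\mu$ and cost at most $(V_{m-n_0}+n_0\|P\|)/m$. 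What your approach buys is a self-contained, elementary proof with explicit rates, uniform in $\mu$, which also shows the supremum is approached by periodic measures. It uses only fixed-length connectors in $G$ and $H$ and the existence of $\lim_n V_n/n$; the eventual periodicity of \Cref{lem:M_star_periodic} that you announce at the start is never actually needed. The citation buys generality, since the cited theorem sits in a broader adversarial ergodic optimization framework, but here it is a black box. Two small clean-ups. First, the ergodic-decomposition detour is unnecessary, because your construction handles every $\mu$ at once. Second, only the reserved-$n_0$-sites version of the gluing is legitimate: full-length block minimizers followed by connectors would break the alignment with the $m$-blocks. Measurability is automatic, since $\Phi_m$ is continuous, each output coordinate depending on two consecutive blocks of $x$.
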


We now describe certain shifts that in a certain well-defined sense capture all the ``optimal strategies'' of Alice and Bob.
Consider primitive graphs $G,H$
and $P:\cE{G} \times \cE{H} \to \Z$.
For $p \in \cW_n(H)$,
let 
\[
\cR(p) : = \bigcup_{v,w \in \cV{G}}\left\{ q \in \cW_n(G,v \to w)~:~ M(p,v \to w) =\sum_{j=1}^nP(q_j,p_j)\right\}.
\]
Define 
\begin{equation*}\label{eq:X_H_P_star_def}
    X_{H}^\dagger : = \left\{ x \in X_H ~:~ x_{[m,n]} \in \cW_{n-m+1}^\dagger(H) ~ \forall m <n \right\}.
\end{equation*}
and
\begin{equation}\label{eq:X_G_H_P_hat_def}
    Y_{(H,G)}^\dagger : = \left\{ (x,y) \in  X^\dagger_{H} \times X_G ~:~ y_{[m,n]} \in \cR(x_{[m,n]}) ~ \forall m <n \right\},
\end{equation}
where $x_{I}$ denotes the restriction of $x : \Z \to \cE{G}$ to the set $I \subseteq \Z$.

It is easy to verify that both $X_H^\dagger$ and 
$Y_{(H,G)}^\dagger$ are closed and shift-invariant. 
The following lemma shows that they are non-empty.
\begin{lemma}\label{lem:X_H_dagger_Y_H_G_dagger}
    If $X_G$ and $X_H$ are non-empty, then
    both $X_H^\dagger$ and $Y_{(H,G)}^\dagger$ are non-empty. Moreover, for every $x \in X_H^\dagger$ there exists $y \in X_G$ such that $(x,y) \in Y_{(H,G)}^\dagger$.
\end{lemma}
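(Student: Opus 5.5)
Both claims will follow from a compactness argument, once we know that non-improvable walks exist in every length and extend in the right way.

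First we show $X_H^\dagger\neq\emptyset$. For each $n$ the set $\cM_n(v\to w)$ is finite, and for $n\ge n_0$ (with $n_0$ as in \Cref{lem:M_p_finite} and primitivity of $H$) it is nonempty. Hence $\cM_n^\dagger(v\to w)$ is nonempty, and so $\cW_n^\dagger(H)\neq\emptyset$ for all large $n$. Pick $p^{(n)}\in\cW_{2n+1}^\dagger(H)$ for each large $n$ and view it as a word on the coordinates $[-n,n]$. By \Cref{lem:non_improvable_subpath}, every subword $p^{(n)}_{[a,b]}$ is again non-improvable, provided the relevant $M$-values are $\Z$-valued. This holds whenever $p^{(n)}$ has length at least $n_0$. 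For short subwords the factorization lemma \Cref{lem:maximal_factorization} still gives maximality, since a function taking the value $+\infty$ is trivially maximal in the paper's sense. We therefore need the hypothesis only for the full word, and it holds there. Extending $p^{(n)}$ arbitrarily to a point of $\cE{H}^\Z$ and taking a limit point $x$ along a subsequence (compactness of $\cE{H}^\Z$), every window $x_{[m,n]}$ agrees with a window of some $p^{(N)}$, so it is non-improvable and a legal walk. Thus $x\in X_H^\dagger$.

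Next, fix $x\in X_H^\dagger$. For each $n$, let $p=x_{[-n,n]}$ and choose an optimal $q^{(n)}\in\cR(p)$, that is, a minimizer of $M(p,v\to w)$ over some $v,w$. This is possible once $2n+1\ge n_0$, because then all path sets are nonempty. The key observation is that a restriction of an optimal walk is optimal. If $q=q'*q''*q'''$ is optimal for $p=p'*p''*p'''$ from $v$ to $w$, and the middle piece $q''$ runs from $a$ to $b$ but is not optimal for $p''$ from $a$ to $b$, then replacing $q''$ by a better walk from $a$ to $b$ keeps the endpoints $v,w$ and strictly lowers the sum. This contradicts optimality, so $q^{(n)}_{[m,k]}\in\cR(x_{[m,k]})$ for all $-n\le m<k\le n$.

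A diagonal compactness argument then gives $y\in\cE{G}^\Z$ with $y_{[m,k]}=q^{(N)}_{[m,k]}$ for suitable large $N$. Hence $y\in X_G$, since consecutive edges match, and $y_{[m,k]}\in\cR(x_{[m,k]})$. So $(x,y)\in Y_{(H,G)}^\dagger$, which proves the second claim and, together with the first, that $Y_{(H,G)}^\dagger\neq\emptyset$.

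The main obstacle is the first step: making sure that subwords of non-improvable words are non-improvable even when they are short and their $M$ takes the value $+\infty$. If the convention on $+\infty$ fails, the fallback is to note that the shift-space definition only needs to hold for windows of length at least $n_0$. We would then check whether the paper intends the definition for all $m<n$, possibly by choosing $p^{(n)}$ whose short windows are handled separately.
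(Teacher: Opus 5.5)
Your proposal is correct and follows essentially the same route as the paper: non-improvable walks exist in every length, so compactness yields a point of $X_H^\dagger$. For the second claim, restricting an $\cR$-optimal walk to a sub-window keeps it optimal (the paper strips the first and last edges, you cut out an arbitrary middle piece), so a second compactness argument yields $y$.

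You are more explicit than the paper about needing \Cref{lem:non_improvable_subpath} for the first compactness step, but your $+\infty$ remark does not quite close the case where a short subword has $\Z$-valued $M$ while the intermediate word does not. The clean fix is this: once $|p^{(n)}|\ge 2n_0$, every subword lies in a prefix or suffix of length at least $n_0$, which is $\Z$-valued by \Cref{lem:M_p_finite}, so \Cref{lem:non_improvable_subpath} can simply be applied twice.
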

\begin{proof}
    We first show that $X_H^\dagger$ is non-empty. Because $X_H$ is non-empty for every $n \in \N$, the set $\cW_{n}(H)$ is a non-empty finite set. So for every $n \in \N$   there exist non-improvable walks, meaning that $\cW^\dagger_n(H) \ne \emptyset$. We can then apply a compactness argument to find a point $x \in X_H^\dagger$.

    Now, using essentially the same arguments, we will show that for every $x \in X_H^\dagger$, there exists $y \in X_G$ such that $(x,y) \in Y_{(H,G)}^\dagger$.
    For every $n \in \N$ and  
    $p \in \cW_{n}(H)$ there exists
    $v,w \in \cV{G}$ 
    such that $\cW_n(G,v \to w) \ne \emptyset$. 
    By definition of $M(p,v \to w)$ as a minimizer, there exists $q \in \cW(G,v \to w)$ such $M(p,v \to w) = \sum_{j=1}^nP(q_j,p_j)$, that is $q \in \mathcal{R}(p)$. Also, if $p'$ is the sub-walk of $p$ obtained by removing the first and last edges, and $q'$ is the sub-walk of $q \in \cR(p)$ obtained by removing the first and last edges, then $q' \in  \mathcal{R}(p')$. Indeed, otherwise, if $q' \in \cW_{n-2}(G,v' \to w') \setminus \cR(p')$, then
    there exists $\tilde q'\in \cW_{n-2}(G,v'\to w')$ such that 
    \[
    \sum_{j=1}^{n-2}P(\tilde q'_j,p'_j)= M(p,v \to w) < \sum_{j=1}^{n-2}P(q'_j,p'_j),
    \]
    So by replacing the sub-walk $q'$ by $\tilde q'$ in $q$, we would obtain $\tilde{q} \in \cW_n(G)$ having the same initial and terminal vertices as $q$ such that
    \[
    \sum_{j=1}^n P(\tilde q_j,p_j) < \sum_{j=1}^n P(q_j,p_j),
    \]
    contradicting the assumption that $q \in \mathcal{R}(p)$.
    
    Fix $x \in X_H^\dagger$.
    For every $n \in \N$, choose a walk $\left( q^{(k)} \right)_{i = -k}^k \in \cR\left(x_{[-k, k]} \right)$. For each $k \in \N$, consider the point $y^{(k)} \in \left( \cE{G} \cup \{\theta \} \right)^\Z$ given by
    \begin{align*}
    y_i^{(k)}   & = \begin{cases}
    q_i^{(k)}   & \textrm{if $-k \leq i \leq k$,} \\
    \theta    & \textrm{otherwise.}
    \end{cases}
    \end{align*}
    Then by the compactness of $(\cE{G} \cup \{\theta\})^\Z$, the sequence $\left( y^{(k)} \right)_{k = 1}^\infty$ will have a limit point $y$, and we can verify directly that the limit point $y$ will satisfy $y \vert_{[m, n]} \in \cR\left( x_{[m, n]} \right)$ for all $m \leq n$. Thus $(x, y) \in Y_{(H, G)}^\dagger$.
\end{proof}
 
 The following proposition relates $V(G,H,P)$ with the shift-invariant probability measures on $X_H^\dagger$ and $Y_{(H,G)}^\dagger$. See section \cite{young2025adversarialergodicoptimization} for related statements.
 
\begin{thm}\label{thm:optimizing_measures}
Let $G,H$ be primitive graphs and $P:\cE{G} \times \cE{H} \to \Z$. Then
\begin{enumerate}
    \item  Let 
    $\mu$ be a shift-invariant probability measure on $X_H$.
    Then the following are equivalent:
    \begin{enumerate}[label = (1\alph*)]
        \item \[
    \mu\left(\left\{ x\in X_H~:~ \lim_{n \to \infty}\frac{1}{n}\min M(x_{[1,n]})=V(G,H,P) \right\}\right)= 1,
    \]
    \item 
     \[
    \limsup_{n \to \infty}\frac{1}{n}\int_{X_H}\min M(x_{[1,n]}) d\mu(x)= V(G,H,P),
    \]
    \item  $\mu \left( X^\dagger_H \right) =1$.
    \end{enumerate}
   \item Let $\lambda$ be a shift-invariant probability measure on $X_H^\dagger \times X_G$. Then the following are equivalent:
   \begin{enumerate}[label=(2\alph*)]
       \item \[
       \lambda \left( \left\{ (x,y) \in X_H^\dagger \times X_G ~:~ 
       \lim_{n \to \infty}\frac{1}{n}\sum_{i=1}^nP(y_i,x_i) = V(G,H,P)
       \right\}\right) =1 ,
       \]
       \item 
\[
\int_{X_H^\dagger \times X_G} P(x_0,y_0) d\lambda(x,y) = V(G,H,P).
\]
       \item $\lambda\left(Y_{(H,G)}^\dagger\right)=1$.
   \end{enumerate}
\end{enumerate}
\end{thm}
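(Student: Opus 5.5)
The plan is to handle both parts the same way. The implications toward (c) follow from \Cref{lemma:non_imp_almost_optimal}. The converse implications toward (c) use a ``local improvement'' argument driven by the ergodic theorem. Throughout, \Cref{lem:M_star_periodic} gives $V_n(G,H,P)=nV(G,H,P)+O(1)$; write $K$ for a constant with $|V_n(G,H,P)-nV(G,H,P)|\le K$ for all $n$. I will also use two properties of tropical convolution. It is monotone. And if $f<f'$ are $\Z$-valued and $g,h$ are $\Z$-valued, then $g*f*h\ge g*f'*h-1$ pointwise, hence $g*f*h< g*f'*h$ (integer values).

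\textbf{Part (1).} For (1c)$\Rightarrow$(1a): if $x\in X_H^\dagger$, then every $x_{[1,n]}$ is non-improvable. For $n\ge N$, \Cref{lemma:non_imp_almost_optimal} gives $|\min M(x_{[1,n]})-V_n|\le C$. Dividing by $n$ yields the limit $V(G,H,P)$. For (1a)$\Rightarrow$(1b): since $|\min M(x_{[1,n]})|\le n\|P\|$, bounded convergence applies. The main step is (1b)$\Rightarrow$(1c), which I prove by contraposition. Suppose $\mu(X_H^\dagger)<1$. Then some cylinder $[w]$ has positive measure, where $w\in\cW_m(H,a\to b)\setminus\cW_m^\dagger(H)$. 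Using \Cref{lem:non_improvable_subpath} (extending $w$ if needed), I may take $m\ge n_0$, so that all $M(\cdot)$ of length-$m$ walks are $\Z$-valued. Choose $w'\in\cW_m(H,a\to b)$ with $M(w')>M(w)$. By the ergodic theorem applied through the ergodic decomposition, there are $\delta>0$ and a set $B$ with $\mu(B)>0$ such that for $x\in B$ and all large $n$, $x_{[1,n]}$ contains at least $\delta n$ occurrences of $w$. Greedily, at least $\delta n/m$ of them are pairwise disjoint. Replacing each of these occurrences by $w'$ gives a valid walk $x'\in\cW_n(H)$, since the endpoints agree. By \Cref{lem:tropical_conv} and the observation above, each replacement raises every entry of $M$ by at least $1$. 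Hence
\[
\min M(x_{[1,n]})\le \min M(x')-\delta n/m\le V_n-\delta n/m\le nV+K-\delta n/m \quad\text{for }x\in B.
\]
Off $B$ we only use $\min M(x_{[1,n]})\le V_n$. Integrating and dividing by $n$ gives $\limsup\frac1n\int\min M(x_{[1,n]})\,d\mu\le V-\mu(B)\delta/m<V$, which contradicts (1b). The main bookkeeping obstacle is making the ``each replacement gains at least $1$'' step rigorous when the replaced blocks are interleaved with arbitrary $\Z$-valued factors. This is exactly the strictness property of $*$ noted above, applied once per block.

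\textbf{Part (2).} For (2c)$\Rightarrow$(2a): if $(x,y)\in Y_{(H,G)}^\dagger$ and $y_{[1,n]}\in\cW_n(G,v\to w)$, then $\sum_{i=1}^nP(y_i,x_i)=M(x_{[1,n]},v\to w)$. Since $x_{[1,n]}$ is non-improvable, this value lies within $C$ of $V_n$ by \Cref{lemma:non_imp_almost_optimal}, and dividing by $n$ gives the limit. For (2a)$\Rightarrow$(2b): Birkhoff's theorem and shift invariance give $\int P\,d\lambda=\int\lim_n\frac1n\sum_{i=1}^n P\,d\lambda=V$. For (2b)$\Rightarrow$(2c), I again argue by contraposition. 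For every $(x,y)$ with $x\in X_H^\dagger$,
\[
\sum_{i=1}^nP(y_i,x_i)\ge M(x_{[1,n]},s(y_1)\to t(y_n))\ge V_n-C\ge nV-K-C.
\]
If $\lambda(Y^\dagger_{(H,G)})<1$, then some fixed pair $(p,q)$ of length $m$ with $q\notin\cR(p)$ occurs as $(x,y)_{[j,j+m-1]}$ on a set of positive measure. For such a pair, $\sum P(q_i,p_i)\ge M(p,s(q)\to t(q))+1$. Swapping each of $\ge\delta n/m$ disjoint occurrences (on a set $B$ of positive measure, by the ergodic theorem as before) for an optimal $q'$ with the same endpoints gives
\[
\sum_{i=1}^nP(y_i,x_i)\ge nV-K-C+\delta n/m \quad\text{on }B.
\]
Integrating and using invariance, $n\int P\,d\lambda\ge nV-K-C+\mu(B)\delta n/m$. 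Dividing by $n$ and letting $n\to\infty$ contradicts (2b).
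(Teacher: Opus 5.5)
Your proposal is correct and follows essentially the paper's route. In both, (c)$\Rightarrow$(a) comes from \Cref{lemma:non_imp_almost_optimal}, (a)$\Rightarrow$(b) from bounded convergence, and (b)$\Rightarrow$(c) from swapping pairwise disjoint occurrences of a bad block for a better one with the same endpoints (gaining at least $1$ per block by integrality) and then integrating.

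The paper does not need the ergodic decomposition. The bound $\min M(x_{[1,n]})\le V_n-\frac{1}{m}D_n(x,w)$, where $D_n(x,w)$ counts occurrences of $w$ in $x_{[1,n]}$, holds for every $x$, and shift-invariance gives $\int D_n\,d\mu=(n-m+1)\mu([w])$ directly.

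Two small corrections. Your auxiliary inequality should read $g*f*h\le g*f'*h-1$, which is the direction you actually use. It also holds when the gap factors $g,h$ are $\Z_\infty$-valued, so the gaps need not be $\Z$-valued.
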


\begin{proof}
   \textbf{Proof of claim (1)}:

    \textbf{ (1c) $\Rightarrow$ (1a)}:
        By \Cref{lemma:non_imp_almost_optimal}, there exists $C \in \N$ such that
        \begin{align*}
        -C \le M(p)-V_n(G,H,P) \le C   &   & \textrm{for all $n \in \N, p \in \cW_n^\dagger(H)$,}
        \end{align*}
        and so for any $p \in \cW_n(H)$, we have
       \[
         M(p) \le V_n(G,H,P)+  C,
        \]
        and therefore for any $x^\dagger \in X_H^\dagger$, we have 
        \[
        \lim_{n \to \infty} \frac{1}{n} \min M\left(x^\dagger_{[1,n]}\right)=V(G,H,P).
       \]
        It also follows directly that for any $x \in X_H$ we have
        \[
        \limsup_{n \to \infty} \frac{1}{n} \min M(x_{[1,n]}) \le V(G,H,P).
        \]
        Let $g_n:X_H \to \R$ be defined by 
        \[g_n(x):= \frac{1}{n}\min M(x_{[1,n]}).\]
        Then by the above discussion, we have $\lim_{n \to \infty}g_n(x)=V(G,H,P)$ for every $x \in X_H^\dagger$, so $(c)$ implies that $g_n \stackrel{n \to \infty}{\to} V(G,H,P)$ $\mu$-almost surely. 
        We have thus proved (1c) $\Rightarrow$ (1a).

        \textbf{ (1a) $\Rightarrow$ (1b)}:
        Since $|g_n(x)| \le \|P\|$ for all $x \in X_H$, 
        the dominated convergence theorem implies that $\int g_n(x) d\mu(x) \stackrel{n \to \infty}{\to} V(G,H,P)$ if 
        $g_n(x) \stackrel{n \to \infty}{\to} V(G,H,P)$ $\mu$-almost surely.
        
        We have thus proved (1a) $\Rightarrow$ (1b).

        \textbf{ (1b) $\Rightarrow$ (1c)}:
        Suppose that (1b) holds.
        Fix any   $k \in \N; v,w \in \cV{H}$ and $p \in \cW_k(H, v \to w) \setminus \cW_k^\dagger(H, v\to w)$. Denote
        \[
        [p]:= \left\{ x \in X_H~:~ (x_{[1,k]}) = p \right\} \subseteq X_H.
        \]
        Our goal is to show that $\mu([p])=0$.
        
        Since $p \in  \cW_k(H, v \to w) \setminus \cW_k^\dagger(H, v \to w)$, 
        there exists $p' \in \cW_k^\dagger(H, v \to w)$ such that $  M(p) <  M(p')$.
        Let $\beta= \min \left(M(p')-  M(p)\right)$.
        Given $x \in X_H$ and $n \in \N$, define
        \[
        D_n(x,p) :=\left|\left\{1 \le j \le n -k ~:~ x_{[j,j+k]}=p  \right\}\right| =\sum_{j=1}^{n-k}\mathbbm{1}_{[p]}(\sigma^j(x)).
        \]
        We claim that for every $x \in X_H$, we have
        \begin{equation}\label{eq:M_le_V_minus}
        \min M(x_{[1,n]}) \le V(G,H,P) - \frac{\beta}{k}D_n(x,p).    
        \end{equation}
        
        Indeed, let $T:=\lfloor D_n(x,p)/k \rfloor$. Then we can find $t_1, \ldots, t_T$ such that
        \[1 \le t_1<t_1+k \le t_2 < t_2 + k \le t_3  \le \ldots < t_T \le n-k\]
        and $x_{[t_j + 1,t_j +k]}=p$ for all $1\le j \le T$.
        Let $x' \in X_H$ be the point obtained by 
        replacing each of the $T$ occurrences of $p$ in $x_{[1,n]}$ starting at $t_1,\ldots,t_T$ with an occurrence of $p'$.
        That is, $x'_{[t_j + 1,t_j+k]}=p'$ for $1\le j \le T$, and $x'_i = x_i$ for
        all $i \in \Z \setminus \bigcup_{j=1}^T[t_j + 1,t_{j+k}]$.
        Since $p',p$ both have the same vertex $v$ as their initial vertex and $w$ as their terminal vertex, $x'$ is indeed a valid point in $X_H$.

        It follows that
       \[
        \min M(x_{[1,n]}) \le \min M(x'_{[1,n]}) - \frac{\beta}{k}D_n(x,p)\le V_n(G,H,P)  - \frac{\beta}{k}D_n(x,p) - C/k,
        \]
        which proves \eqref{eq:M_le_V_minus}.
        Now for any $n > k$, using that $\mu$ is shift-invariant, we have
        \[
        \int_{X_H} D_n(x,p) d\mu(x)= \sum_{j=1}^{n-k}\int_{X_H} \mathbbm{1}_{[p]}(\sigma^j(x))d\mu(x) = (n-k) \mu([p]), 
        \]
        So:
    \[
    \lim_{n \to \infty}\frac{1}{n}\int \frac{\beta}{k}D_n(x,p)d\mu(x) = \frac{\beta}{k}\mu([p]).
    \]
    Let $f_n:X_H \to \R$ be given by
    \[
    f_n(x)= \|P\| -  \frac{1}{n}\min M(x_{[1,n]}).
    \]
    Since $f_n \ge 0$, Fatou's lemma together with \eqref{eq:M_le_V_minus} gives that
    \[
    \liminf_{n \to \infty} \int_{X_H} f_n(x) d\mu(x) \ge \|P\| - V(G,H,P)  +   \frac{\beta}{k}\mu([p]).
    \]
    Rearranging terms, we conclude that 
    \[
    \limsup_{n \to \infty} \frac{1}{n}\int_{X_H} \min M(x_{[1,n]}) d\mu(x) \le V(G,H,P) - \frac{\beta}{k}\mu([p]).
    \]
    By the assumption that (1b) holds, since $\beta>0$ we have that $\mu([p])=0$.
    Since $k \in \N$, $p \in \cW_k(H) \setminus \cW^\dagger(H)$ were arbitrary, and
    \[X_H^\dagger = X_H \setminus \bigcup_{k \in \N}\bigcup_{n \in Z}\bigcup_{p \in \cW_k(H) \setminus \cW^\dagger(H)} \sigma^n([p]),\]
    we conclude that $\mu(X_H^\dagger)=1$ so (1c) holds.
    
       \textbf{Proof of claim (2)}:

\textbf{(2c) $\Rightarrow$ (2a)}: Let $\pi_H:X_H^\dagger \times X_G \to X_H^\dagger$ be given by $\pi_H(x,y)=x$, and suppose that (2c) holds. Note that in light of \Cref{lem:X_H_dagger_Y_H_G_dagger}, this $\pi_H$ is in fact a factor map of shifts. Let $\mu:= \left( \pi_H\right)_* \lambda$. Then $\mu$ is a shift-invariant probability measure on $X_H^\dagger$, thus satisfying condition (1c). By the implication (1c) $\Rightarrow$ (1a) from claim (1), we see
        that $\lim_{n \to \infty}\frac{1}{n}\min M(x_{[1,n]})= V(G,H,P)$ $\mu$-almost surely. By definition of $Y_{(H,G)}^\dagger$,
        for any $(x,y) \in Y_{(H,G)}^\dagger$ we have that 
       \[
       \frac{1}{n}\sum_{j=1}^n P(y_j,x_j) = \frac{1}{n} M(x_{[1,n]}, s(y_1) \to t(y_n)).
       \]
       It follows that $\lim_{n \to \infty}\frac{1}{n}\min \frac{1}{n}\sum_{j=1}^n P(y_j,x_j)= V(G,H,P)$ $\lambda$-almost surely, so (2a) holds.

        \textbf{(2a) $\Rightarrow$ (2b)}: Suppose that (2a) holds.
        Let $F:X_H^\dagger \times X_G \to \R$ be given by $F(x,y)= P(x_0,y_0)$.
        Note that because $\lambda$ is shift-invariant we have that for every $n \in \N$
        \begin{align*}
        \int_{X_H^\dagger \times X_G}P(x_0,y_0)d\lambda(x,y)    & = \frac{1}{n}\int_{X_H^\dagger \times X_G}\sum_{j=1}^n F(\sigma^j(x),\sigma^j(y))d\lambda(x,y) \\
            & =\int_{X_H^\dagger \times X_G} \frac{1}{n} \sum_{j = 1}^n P(y_j,x_j)d\lambda(x,y).
        \end{align*}
        By (2a), the integrand on the right hand side tends to $V(G,H,P)$ $\lambda$-almost surely, so (as in the proof of the implication (1a) $\Rightarrow$ (1b) in claim (1)) we get (2b).

        \textbf{(2b) $\Rightarrow$ (2c)}: The proof idea is again similar to the corresponding proof of the implication (1b) $\Rightarrow$ (1c) in claim (1).

        Assume that (2b) holds. Fix any $k \in \N; p \in \cW_k^\dagger(H); u, w \in \cV{G}$ and $q \in \cW(G, u \to w) \setminus \cR(p)$. Our goal is to show that
        $\lambda ([p,q])= 0$, where
        \[
        [p,q] = \left\{
        (x,y) \in X_H^\dagger \times X_G~:~ x_{[1,k]}=p,~ y_{[1,k]}=q
        \right\}.
        \]
        Let $\beta: = \sum_{i=1}^kP(q_i,p_i) - M(p,u \to w)$. Because $q \not \in \cR(p)$, we have that $\beta >0$.

        Given $(x,y) \in X_H^\dagger \times X_G$ let 
        \[
        D_n(x,y,p,q) := \sum_{j=0}^{n-k-1}\mathbbm{1}_{[p,q]}(\sigma^j(x),\sigma^j(y)).
        \]
        As in the corresponding part of our proof of (1b)$\Rightarrow$(1c), we see that 
        \[
        \sum_{j=0}^{n-k-1}P(y_j,x_j) \ge V_n(G,H,P) + \frac{\beta}{k}D_n(x,y,p,q) -C.
        \]
        Since $\int D_n(x,y,p,q) d\lambda(x,y) = (n-k)\lambda([p,q])$,
        dividing by $n$, integrating, and taking $n \to \infty$ we see
        that
        \[
        \int_{X_H^\dagger \times X_G}P(x_0,y_0)d\lambda(x,y) \ge V(G,H,P) -\frac{\beta}{k}\lambda([p,q]).
        \]
        So assuming (2b), we conclude that $\lambda([p,q])=0$ whenever $q \not \in \cR(p)$. This shows that $\lambda(Y_{(H,G)}^\dagger)=1$, i.e. (2c) holds.

\end{proof}

\begin{thm}\label{thm:X_H_star_sofic}
Both $X^\dagger_{H}$ and $Y^\dagger_{(H,G)}$ are sofic shifts.
\end{thm}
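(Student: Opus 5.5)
We would build explicit finite labeled graph presentations, driven by the eventual periodicity of the sets of non-improvable matrices (\Cref{lem:M_star_periodic}) and the normalized boundedness of \Cref{lemma:non_imp_almost_optimal}. The central observation is that non-improvability of a walk $p$ depends only on $M(p)$ together with its endpoints and length, and $M(p*q)=M(p)*M(q)$ by \Cref{lem:tropical_conv}.

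Our first step is a finite state space. Fix $n_0$ as in \Cref{lem:M_p_finite}. For $p\in\cW_n(H)$ with $n$ large we record the normalized matrix $\overline{M}(p):=M(p)-V_n(G,H,P)$. We would like a set of states of the form (vertex of $H$, normalized matrix in $\{-C,\dots,C\}^{\cV{G}\times\cV{G}}$, residue of $n$ mod $k$). The difficulty is that for arbitrary walks $\overline{M}(p)$ is unbounded below. We plan to fix this with a truncation $\tau$: replace $f$ by $\max(f,-C')$ for a suitably large $C'$, after first arguing that any walk with some normalized entry below $-C'$ cannot lie in $\cM_n^\dagger$, and cannot become part of a non-improvable walk after extension. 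The justification for this is \Cref{lem:non_improvable_subpath} combined with \Cref{lemma:non_imp_almost_optimal}: every subwalk of length at least $N$ of a non-improvable walk is itself non-improvable, so its normalized matrix lies in $\{-C,\dots,C\}$.

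Next we construct the presentation of $X_H^\dagger$ by a sliding-block characterization. By \Cref{lem:non_improvable_subpath}, $x\in X_H^\dagger$ if and only if $x_{[m,n]}$ is non-improvable for all sufficiently long windows. We would show the following property: for $n\ge n_1$ and $m\ge N$, the walk $p*q$ is non-improvable if and only if $p$ and $q$ are non-improvable and $M(p)*M(q)$ is $<$-maximal in $\bigcup_v\cM_n^\dagger(u\to v)*\cM_m^\dagger(v\to w)$. This is \Cref{lem:M_n_recursion}, and by periodicity the relevant sets depend only on the normalized data and the residues of $n,m$ mod $k$.

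With this property, $X_H^\dagger$ is the set of bi-infinite walks along which the state (vertex, normalized matrix, residue) evolves through an automaton whose states are drawn from a finite set and whose transitions are governed by $\Psi$ from the proof of \Cref{lem:M_star_periodic}, with every state reached lying in the allowed finite set $\cM^\dagger$. Concretely, we take the graph with vertices being the allowed states and with an edge labeled $e\in\cE{H}$ from $(v,f,r)$ to $(t(e),\overline{f*M(e)},r+1)$ whenever the new state is again admissible. The main obstacle is the infinite past: the state at time $n$ encodes $M(x_{[m,n]})$ for one starting point $m$, while we need it for all $m$. We would handle this by compactness and a "future/past" argument. Since the normalized matrix of a long non-improvable window determines, via the finitely many sets $\overline{\cM}^\dagger$, which one-step extensions remain non-improvable, a left-infinite path in the automaton certifies every window ending at the current time. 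Closedness of the resulting sofic set then yields equality with $X_H^\dagger$.

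For $Y^\dagger_{(H,G)}$ we augment the states with the current vertex of $G$ and the pair of endpoints, and use the characterization that $q\in\cR(p)$ if and only if the cost of $q$ equals $M(p,s(q)\to t(q))$. With normalized costs bounded by $C$, this amounts to tracking the finite quantity $\sum P(q_j,p_j)-M(p,s(q_1)\to\cdot)$. The same truncation and labeled-graph construction apply, with labels taken in $\cE{H}\times\cE{G}$.
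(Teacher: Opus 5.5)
Your approach is correct in outline but takes a different route from the paper. The paper never constructs a presentation. It proves a criterion, \Cref{lem:sofic_via_local_follower_sets} (via finiteness of the set of future sets), that a shift with finitely many local follower sets relative to a generating forbidden set is sofic. It then takes the generator $\mathcal{F}_H^\dagger=\bigcup_n(\cE{H}^n\setminus\cW_n^\dagger(H))$ and checks that for $p\in\cW_n^\dagger(H,u\to v)$ with $n\ge N$ the local follower set is determined by $\overline{M}(p)$, the endpoints and $n \bmod k$, using \Cref{lem:tropical_conv} and \Cref{lem:M_star_periodic}; $Y^\dagger_{(H,G)}$ is handled the same way. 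You instead make exactly this finite data the vertex set of an explicit labeled graph. The cost is that you must check by hand that this graph presents $X_H^\dagger$, which the abstract criterion gives for free. The gain is a concrete presentation, computable from the output of the algorithm of \Cref{thm:game_value_rational_computable}; the paper's argument provides it only implicitly, as a follower-set graph.

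Several points need fixing when you write it out.
\begin{enumerate}
\item The state must also record the initial vertex $u\in\cV{H}$. Non-improvability means maximality within $\cM_n(u\to v)$, so with only the current vertex a transition could land in $\overline{\cM}^\dagger(u'\to t(e))$ for the wrong $u'$.
\item The truncation $\tau$ is superfluous. You only keep admissible states, and these are bounded by \Cref{lemma:non_imp_almost_optimal}. The normalizing increment $V_{\ell+1}(G,H,P)-V_\ell(G,H,P)$ in $\overline{f*M(e)}$ depends only on $\ell \bmod k$ once $\ell\ge n_1$, so the transitions are well defined.
\item The ``infinite past'' obstacle disappears if you use the definition \eqref{eq:X_sofic}, which works with labels of finite walks. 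It then suffices to show two things.
\begin{enumerate}
\item Every word of a point of $X_H^\dagger$ labels a finite walk in your graph. To see this, read off the states of windows $x_{[m,t]}$ with $m$ far to the left.
\item The label $w$ of any finite walk starting at a state $(u,v,f,r)$ is non-improvable. Realize the state by some $p\in\cW_\ell^\dagger(H,u\to v)$ with $\overline{M}(p)=f$, where $\ell\ge n_1$ and $\ell\equiv r \pmod k$; this is possible by \Cref{lem:M_star_periodic}. The transitions then force $p*w$ to be non-improvable, and \Cref{lem:non_improvable_subpath} finishes.
\end{enumerate}
\item For $Y^\dagger_{(H,G)}$ your extra tracked quantity is redundant. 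With states $(u,v,f,r,a,b)$, the edge condition for a label $(e,e')$ is $(f*M(e))(a,t(e'))=f(a,b)+P(e',e)$. In step (b) you additionally realize the state by some $q\in\cR(p)$ from $a$ to $b$, and use that subwalks of $\cR$-optimal walks are $\cR$-optimal.
\end{enumerate}
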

For the proof of \Cref{thm:X_H_star_sofic}, we will use a  sufficient condition for soficity.
We first recall a well-known  characterization of sofic shifts. Given a shift space $S \subseteq \Sigma^\Z$ over a finite alphabet $\Sigma$, and an admissible word $w \in \mathcal{C}(S)$, the \emph{follower set} of $w$ in $S$, which we denote by $F(S,w)$, is defined to be the set of all words $v \in \mathcal{C}(S)$ that can admissibly follows $w$. That is:
\[
F(S,w) := \left\{ v \in \mathcal{C}(S)~:~ wv \in \mathcal{C}(S) \right\}.
\]
A shift space $S$ is sofic if and only if it has a finite number of follower sets \cite[Theorem 3.2.10]{LindMarcus}, that is 
\[
| \{F(S,w)~:~ w \in \mathcal{C}(S)\}| < +\infty.
\]

Given a set of words $\mathcal{F} \subseteq \bigcup_{k \in\N} \Sigma^n$
and $n \in \N$ denote by 
$\mathcal{\hat C}(\mathcal{F}) \subseteq \Sigma^n$ the set of words 
in which no word from $\mathcal{F}$ occurs as a subword, i.e.
\begin{align*}
 \mathcal{\hat C}_n(\mathcal{F})    & := \left\{ w=(w_1,\ldots,w_n) \in \Sigma^n ~:~ (w_i,\ldots,w_j) \not \in \mathcal{F} ~\forall 1\le i \le j \le n \right\}, &
 \mathcal{\hat C}(\mathcal{F})  & : = \bigcup_{n \in \N} \mathcal{\hat C}_n(\mathcal{F}) .
\end{align*}
If $\mathcal{F}$ is a set of forbidden words that generates a shift $S \subseteq \Sigma^\Z$ (c.f.
\Cref{def:generator_of_shift}),
then we have $\mathcal{C}_n(S) \subseteq \mathcal{\hat {C}}_n(\mathcal{F})$; however, strict inclusion is possible.
In the literature, $\mathcal{\hat C}(\mathcal{F})$ is sometimes called ``the set of locally admissible words for $S$'' (suppressing the dependence on the set of forbidden words $\mathcal{F}$). 
A simple compactness argument
shows that if
$\mathcal{F}$ generates $S$
and $w \in \mathcal{\hat C}(\mathcal{F})$, then $v \in \mathcal{C}_n(S)$ if and only for every $k \in \N$ there exist $u,w \in \mathcal{\hat C}_k(\mathcal{F})$ such that $uvw \in \mathcal{\hat C}(\mathcal{F})$.
For $w \in \mathcal{\hat C}(\mathcal{F})$, we set
    \[
    \hat F(\mathcal{F},w) := \left\{ v \in \mathcal{\hat C}(\mathcal{F}) ~:~ wv \in \mathcal{\hat C}(\mathcal{F}) \right\}.
    \]
We refer to the set $\hat F(\mathcal{F},w)$ as the \emph{local follower set of $w$ with respect to $\mathcal{F}$}.

\begin{lemma}\label{lem:sofic_via_local_follower_sets}
  Let  $S \subseteq \Sigma^\Z$ be a shift space generated by a set $\mathcal{F} \subseteq \bigcup_{k \in\N} \Sigma^k$ of forbidden words, where $\Sigma$ is a finite alphabet.
    If the set of local follower sets $\{\hat F(\mathcal{F},w)~:~ w\in \mathcal{\hat C}(\mathcal{F})\}$ is finite, then $S$ is sofic.
\end{lemma}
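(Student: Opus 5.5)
We reduce to the follower-set characterization of sofic shifts \cite[Theorem 3.2.10]{LindMarcus}: it suffices to show that $S$ has only finitely many follower sets $F(S,w)$, $w \in \mathcal{C}(S)$. The plan is to express each true follower set $F(S,w)$ as a function of a finite amount of data attached to $w$. That data is the local follower set $\hat F(\mathcal{F},w)$ together with something describing how $w$ extends to the left.

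The first step uses the compactness remark preceding the lemma. For $w \in \mathcal{C}(S)$ we have $v \in F(S,w)$ if and only if, for every $k \in \N$, there exist $u,u' \in \mathcal{\hat C}_k(\mathcal{F})$ with $u w v u' \in \mathcal{\hat C}(\mathcal{F})$. Hence $F(S,w)$ is determined by the family of sets
\[
\Phi_k(w) := \left\{ v ~:~ \exists u \in \mathcal{\hat C}_k(\mathcal{F}),\ \exists u' \in \mathcal{\hat C}_k(\mathcal{F}),\ uwvu' \in \mathcal{\hat C}(\mathcal{F}) \right\}, \quad k \in \N,
\]
through $F(S,w)=\bigcap_k \Phi_k(w)$. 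For fixed $u$, the condition on $(v,u')$ reads $vu' \in \hat F(\mathcal{F},uw)$. So
\[
\Phi_k(w)=\bigcup_{u \in \mathcal{\hat C}_k(\mathcal{F}),\ uw \in \mathcal{\hat C}(\mathcal{F})} \left\{ v ~:~ \exists u' \in \mathcal{\hat C}_k(\mathcal{F}),\ vu' \in \hat F(\mathcal{F},uw)\right\}.
\]
Thus $\Phi_k(w)$ depends only on the set $\Lambda_k(w):=\{\hat F(\mathcal{F},uw) : u \in \mathcal{\hat C}_k(\mathcal{F}),\ uw \in \mathcal{\hat C}(\mathcal{F})\}$.

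By hypothesis the family $\mathfrak{F}$ of all local follower sets is finite, so each $\Lambda_k(w)$ is a subset of $\mathfrak{F}$. The sets $\Lambda_k(w)$ are also monotone in $k$: if $u$ has length $k+1$ and $uw$ is locally admissible, then dropping the first letter of $u$ gives a locally admissible extension of length $k$. The local follower sets of the two extensions are related by inclusion, $\hat F(\mathcal{F},uw) \subseteq \hat F(\mathcal{F},u_{2..}w)$, but this is not literal equality of $\Lambda$'s.

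This is the step I expect to be the main obstacle: making the dependence on $k$ finite. The cleanest route is to avoid the sequence altogether. Define
\[
\Lambda_\infty(w):=\{\hat F(\mathcal{F},uw): u \text{ arbitrarily long left-extension, } uw\in\mathcal{\hat C}(\mathcal{F})\},
\]
and more precisely the set of $\Phi \in \mathfrak{F}$ such that for every $k$ there is $u$ of length at least $k$ with $\hat F(\mathcal{F},uw)=\Phi$ and $u$ itself extendable to the left by arbitrarily long locally admissible words. Since $\mathfrak{F}$ is finite, a pigeonhole and König-type argument shows two things:
\begin{itemize}
\item for $k$ large, every $u$ of length $k$ giving a local follower set outside $\Lambda_\infty(w)$ fails to extend far to the left;
\item a true follower $v$ needs, for each $k$, a left-extension of length $k$, and these can be chosen so that their local follower sets lie in $\Lambda_\infty(w)$.
\end{itemize}
From this one gets
\[
F(S,w)=\{v : \exists \Phi\in\Lambda_\infty(w),\ \forall k\ \exists u',\ |u'|=k,\ vu'\in\Phi\}.
\]
Here the existence of arbitrarily long right extensions inside a fixed $\Phi$ is a property of $\Phi$ alone, so no further dependence on $w$ enters. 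Hence $F(S,w)$ is a function of $\Lambda_\infty(w)\subseteq \mathfrak{F}$. There are at most $2^{|\mathfrak{F}|}$ possibilities, so $S$ has finitely many follower sets and is sofic.

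If the interchange in the second bullet proves delicate, I would fall back on a direct construction. Build a labeled graph whose vertices are the elements of $\mathfrak{F}$, with an edge labeled $a$ from $\hat F(\mathcal{F},w)$ to $\hat F(\mathcal{F},wa)$ whenever $a \in \hat F(\mathcal{F},w)$. This is well defined because $\hat F(\mathcal{F},wa)=\{v : av \in \hat F(\mathcal{F},w)\}$ depends only on $\hat F(\mathcal{F},w)$. One then checks, again by compactness, that the bi-infinite label sequences of this graph, restricted to its essential part, are exactly the points of $S$. This would exhibit $S$ as sofic directly.
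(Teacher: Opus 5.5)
Your proposal is correct, but both of your routes differ from the paper's. The paper works with futures of left-infinite pasts. For $x\in S$, the sets $\hat F(\mathcal{F},x_{[-k,-1]})$ decrease in $k$ and take finitely many values, so they stabilize, and the stable value determines $F_\infty(S,x)$. Hence there are at most $|\mathfrak{F}|$ future sets, and soficity follows from the future-set criterion \cite[Exercise 3.2.8]{LindMarcus}.

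Your main route uses the follower-set criterion for words \cite[Theorem 3.2.10]{LindMarcus} instead. Since a finite word $w$ does not fix its left context, you must record the whole set $\Lambda_\infty(w)\subseteq\mathfrak{F}$, which is why you get the bound $2^{|\mathfrak{F}|}$ rather than $|\mathfrak{F}|$. The step you flag as delicate does go through. It is cleanest via the paper's monotonicity observation rather than a pigeonhole over $k$. If $wv\in\mathcal{C}(S)$, take $z\in S$ in which $wv$ occurs, and let $u_k$ be the $k$ symbols of $z$ just before that occurrence. Then $\hat F(\mathcal{F},u_kw)$ stabilizes to some $\Phi\in\Lambda_\infty(w)$, and $v$ extends indefinitely inside $\Phi$, witnessed by $z$ itself. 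Conversely, suppose $\Phi\in\Lambda_\infty(w)$ and $vu'\in\Phi$ with $|u'|=k$. Any $u$ with $|u|\ge k$ and $\hat F(\mathcal{F},uw)=\Phi$ gives $uwvu'\in\mathcal{\hat C}(\mathcal{F})$, and the compactness remark yields $wv\in\mathcal{C}(S)$. This direction never uses left-extendability of $u$, so that clause in your definition of $\Lambda_\infty(w)$ is superfluous, though harmless.

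Your fallback is the most self-contained argument of the three. Take the graph on $\mathfrak{F}$ with edges $\hat F(\mathcal{F},w)\xrightarrow{a}\hat F(\mathcal{F},wa)$. An induction shows that every walk leaving $\hat F(\mathcal{F},w)$ reads a word of $\hat F(\mathcal{F},w)\subseteq\mathcal{\hat C}(\mathcal{F})$, so labels of bi-infinite walks avoid $\mathcal{F}$. Conversely, each $z\in S$ is read along the walk through the stabilized sets $\lim_k\hat F(\mathcal{F},z_{[i-k,i-1]})$, again the paper's stabilization. No restriction to an essential part is needed, since bi-infinite walks live there anyway. This is essentially the proof of the exercise the paper cites, specialized to local follower sets. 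It gives an explicit presentation with $|\mathfrak{F}|$ vertices without invoking either characterization of soficity.
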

\begin{proof}
    For a shift space $S \subseteq \Sigma^\Z$ and configuration $x \in S$, define the \emph{future set} of $x$ by
    \[
    F_\infty(S,x)=\left\{ (y_i)_{i = 0}^\infty \in \Sigma^{\Z_+}~:(\ldots, x_{-3}, x_{-2}, x_{-1}, y_0, y_1, y_2, \ldots)\in S \right\}.
    \]
    It is well known  that a shift space $S$ is sofic if and only if there are finitely many future sets (see \cite[exercise 3.2.8]{LindMarcus} for an explanation of how these future sets can be used to construct a labeled graph presentation of $S$), in the sense that the following holds:
    \[|\{ F_\infty(S,x)~:~ x\in S\}| < +\infty.\]
    Assume that $\mathcal{F}$ is a generator for $S$ and assume that the set of $\{\hat F(\mathcal{F},w)~:~ w\in \mathcal{\hat C}(\mathcal{F})\}$ is a finite set.
    Since $\mathcal{F}$
    genereates $S$,
    it follows that for any $x \in S$ we have
    \[
    F_\infty(S,x) = \left\{
    y_{[0,+\infty)} \in \Sigma^{\Z_+}~:~ y_{[0,n]}\in \hat F(\mathcal{F},x_{[-k,-1]}) ~\forall n,k \in \N
    \right\}.
    \]
    Since  $\hat F(\mathcal{F},x_{[-(k+1),-1]}) \subseteq \hat F(\mathcal{F},x_{[-k,-1]})$ for every $x \in S$ and $k \in \N$, and
    \linebreak$\left|\left\{\hat F(\mathcal{F},w)~:~ w\in \mathcal{\hat C}(\mathcal{F})\right\}\right| < +\infty$, it follows that for any $x \in S$ there exists $k \in \N$ such that
    \[
     F_\infty(S,x) = \left\{
    y_{[0,+\infty)} \in \Sigma^{\Z_+}~:~ y\vert_{[0, n]}\in \hat F(\mathcal{F},x\vert_{[-k,-1]}) ~\forall n \in \N
    \right\} = \hat F(\mathcal{F},x\vert_{[-k,-1]}).
    \]
    In particular, in this case the set of future sets of $S$ is a subset of the set of local follower sets of $\mathcal{F}$, and in particular it is finite, hence $S$ is sofic.
\end{proof}

\begin{proof}[Proof of \Cref{thm:X_H_star_sofic}]

We will prove that $X_{H}^\dagger$ and $Y_{(H,G)}^\dagger$ are sofic by proving that both have a finite number of local follower sets with respect to an appropriate generator, and applying \Cref{lem:sofic_via_local_follower_sets}.

We begin by proving that $X_{H}^\dagger$ has a finite number of local follower sets.
Observe that the following set of forbidden words generates $X_H^\dagger \subseteq \cE{H}^\Z$:
\[
\mathcal{F}_H^\dagger= \bigcup_{n \in \N}\left( \cE{H}^n \setminus \cW_n^\dagger(H) \right).
\]

By \Cref{lemma:non_imp_almost_optimal} there exists $C,N_1 \in \N$ such that $\overline{\cM}^\dagger_n(u \to v) \subseteq \{-C,\ldots,C\}^{\cV{G} \times \cV{G}}$ for every $u,v \in \cV{H}$ and $n \ge N_1$, so the set for every $u,v \in \cV{H}$ the set $\bigcup_{n \in \N}\overline{\cM}^\dagger_n(u \to v)$ is a finite set.
By \Cref{lem:M_star_periodic} there exists $N \ge N_1$ and $k \in \N$ such that 
   $\overline{\cM}^\dagger_{n+k}(u \to v)=\overline{\cM}^\dagger_n(u \to k)$ for every $n \ge N$.
   It follows that for any $n \ge N$ we have that $\mathcal{\hat C}_n(\mathcal{F}_H^\dagger)=\mathcal{W}_n^\dagger(H)$.
It's trivial that there are only finitely many local follower sets of the form $\hat{F} \left( \mathcal{F}_H^\dagger, p \right)$ for $p \in \bigcup_{n = 0}^{N - 1} \mathcal{C}_n(S)$, so it remains to be shown that there are only finitely many values that $\hat{F} \left( \mathcal{F}_H^\dagger, p \right)$ can take for $p \in \hat{C}_n \left( \mathcal{F}_H^\dagger \right), n \geq N$.

For $p \in \cW_n(H,u \to v)$, set
\[
\overline{M}(p) := M(p) - V_n(G,H,P).
\]
Then $p \in \cW_n^\dagger(H,u \to v)$ if and only if $\overline{M}(p) \in  \overline{\cM}^\dagger_n(u \to v)$.
Hence, for any $ n \ge N$, we have that $p \in \mathcal{\hat C}_n(\mathcal{F}_H^\dagger)$ if and only if there exist $u,v \in \cV{H}$ such that $p \in \mathcal{W}_n^\dagger(H, u \to v)$.
For $p \in \mathcal{W}_n^\dagger(H, u \to v)$, we have
\[
 \hat F(\mathcal{F}_H^\dagger,p) = \bigcup_{m \in \N}\bigcup_{w \in \cV{H}}\left\{ q \in \cW_m(H,v \to w)~:~ \overline{M}(p*q) \in \overline{\mathcal{M}}_{n+m}^\dagger(H) \right\}.
\]

For $n,m \in \N$ and $f,g \in \Z_{\infty}^{\cV{H} \times \cV{H}}$ denote
\[
\left( f * g\right)_{n,m} := f*g - V_{n+m}(G,H,P) +V_n(G,H,P)+V_m(G,H,P).
\]
It follows from \Cref{lem:tropical_conv} that
for any $n ,m \in \N$ 
\[
\overline{M}(p*q)=(\overline{M}(p)*\overline{M}(q))_{n,m}.
\]

We conclude that for $n \ge N$ and  $p \in \cW_n^\dagger(H,u \to v)$
\[
 \hat F(\mathcal{F}_H^\dagger,p) = \bigcup_{m \in \N}\bigcup_{w \in \cV{H}}\left\{ q \in \cW_m(H,v \to w)~:~ (\overline{M}(p)*\overline{M}(q))_{n,m} \in \overline{\mathcal{M}}_{n+m}^\dagger(H) \right\}.
\]
Also, since there exists a constant $V^\dagger \in \R$ such that $V_{n+k}(G,H,P) = V_n(G, H, P) + V^\dagger$ and $\overline{\cM}_n(H,u \to v)= \overline{\cM}_{n+k}(H,u \to v)$ for any $n \ge N$ (c.f. \Cref{lem:M_star_periodic}), we see that 
\[
(f*g)_{n+k,m}=(f*g)_{n,m} ~\forall n \ge N.
\]
From that it follows that for any $n \ge N; u,v \in \cV{H};$ and $p \in \cW^\dagger_{n+k}(H,u \to v)$, there exists $\tilde p \in \cW^\dagger_{n}(H,u \to v)$ such that $\overline{M}(p)=\overline{M}(\tilde p)$, and in that case we have that
$\hat F(\mathcal{F}_H^\dagger ,p)=\hat F(\mathcal{F}_H^\dagger
,\tilde p)$.
We thus conclude that any local follower set of $X_H^\dagger$ is of the form 
$\hat F(\mathcal{F}_H^\dagger,p)$ for some $n \le N+k, p \in \mathcal{C}_n(X_H^\dagger)$.
This shows that the set $\{ \hat F(\mathcal{F}_H^\dagger,p)~:~ p \in \cW_n(H), n \in \N\}$ of local follower sets is finite,  hence $X_H^\dagger$ is sofic.

We will now prove that $Y_{(H,G)}^\dagger$ is also sofic. The proof strategy is similar to the previous part.
Observe that the following is a generator for $Y_{(H,G)}^\dagger \subseteq \cE{H}^\Z\times\cE{G}^\Z$:
\[
\mathcal{F}_{(H,G)}^\dagger = \bigcup_{n \in  \N}\left\{ (p,q) \in \cE{H}^n\times \cE{G}^n~:~ p \not \in \mathcal{\hat C}(\mathcal{F}_H^\dagger) \mbox{ or } q \not \in \mathcal{R}(p)
\right\}.
\]
We will prove the soficity of $Y_{(H,G)}^\dagger$ by  applying \Cref{lem:sofic_via_local_follower_sets} and showing that \[\left|\{\hat F(\mathcal{F}_{(H,G)}^\dagger,(p,q))~:~ (p,q) \in \mathcal{\hat C}(\mathcal{F}_{(H,G)}^\dagger)\}\right| < +\infty.\] 

Hence for $n \ge N, m \in \N, (p,q) \in \mathcal{\hat C}_n(\mathcal{F}_{(H,G)}^\dagger)$, and 
$(p',q') \in \mathcal{\hat C}_m(\mathcal{F}_{(H,G)}^\dagger)$
such that
$p \in \cW_n^\dagger(H,u \to v)$ and $q \in \cW_n(G,u' \to v')$,
we have that $(p',q') \in \hat F(\mathcal{F}_{(H,G)}^\dagger,(p,q))$ if and only if there exists $w \in \cV{H}$ and $w' \in \cV{G}$ such that $p' \in \cW_m^\dagger(H,v \to w)$ and
\[M(p*p',u' \to w')=M(p,u' \to v')+
\sum_{j=1}^m P(q'_j,p'_j).\]
By unraveling the definitions, applying \Cref{lem:tropical_conv} and moving terms around, the last equation can be rewritten as follows:
\[
(\overline{M}(p)*\overline{M}(p'))_{n,m}(u' 
, w')-\overline{M}(p,u' \to v') +V_m(G,H,P) = \sum_{j=1}^m P(q'_j,p'_j).
\]
In particular, for every $n \ge N+k$, $p \in \cW_n^\dagger(H,u \to v)$ and $q \in \cW_n(G,u' \to v')$ there exist $\tilde p \in \cW_{n-k}^\dagger(H,u \to v)$, $\tilde q \in \cW_{n-k}(G,u' \to v')$ such that $\hat F(\mathcal{F}_{(H,G)}^\dagger,(p,q))=\hat F(\mathcal{F}_{(H,G)}^\dagger,(\tilde p,\tilde q))$.
Just like in the first part of the proof, this shows that the set of local follower sets $\{\hat F(\mathcal{F}_{(H,G)}^\dagger,(p,q))~:~ (p,q) \in \mathcal{\hat C}(\mathcal{F}_{(H,G)}^\dagger)\}$ is finite, hence $Y_{(H,G)}^\dagger$ is sofic.
\end{proof}

\begin{remark}
It is known that $X_H^\dagger$ can fail to be of finite type, as shown in \cite[Section 3.1]{young2025adversarialergodicoptimization}.
\end{remark}

As a corollary, we see that for any primitive  graphs and payoff function $P:\cE{G} \times \cE{H} \to \Z$ the Nash equilibrium of corresponding non-alternating  mean payoff game is obtained by periodic strategies in the following sense:
\begin{cor}\label{cor:periodic_Nash_equilibrium}
    Let $G$ and $H$ be primitive graphs and $P:\cE{G} \times \cE{H} \to \Z$. Then there exist 
    infinite walks $p \in \cW_{+\infty}(H) \subseteq \cE{H}^\N$, $q \in \cW_{+\infty}(G) \subseteq \cE{G}^\N$ that are both periodic in the sense that there exists $k \in \N$ such that $p_{n+k}=p_n$ and $q_{n+k}=q_n$ for all $n \in \N$, and so that 
    \begin{enumerate}
        \item $\lim_{n \to \infty}\frac{1}{n}\sum_{j=1}^nP(q_i,p_i) = V(G, H, P)$;
        \item for any $\tilde q \in \cW_{+\infty}(G)$ we have
        \[
        \liminf_{n \to \infty}\frac{1}{n}\sum_{j=1}^nP(q_i,\tilde p_i) \ge \lim_{n \to \infty}\frac{1}{n}\sum_{j=1}^nP(q_i, p_i);
        \]
        and
        \item for any $\tilde p \in \cW_{+\infty}(H)$ there exists $\tilde q \in \cW_{+\infty}(G)$ such that
        \[
        \limsup_{n \to \infty}\frac{1}{n}\sum_{j=1}^nP(\tilde q_i,\tilde p_i) \le \lim_{n \to \infty}\frac{1}{n}\sum_{j=1}^nP(q_i, p_i).
        \]
    \end{enumerate}
\end{cor}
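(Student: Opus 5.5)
The plan is to use the soficity established in \Cref{thm:X_H_star_sofic}, together with the measure characterizations in \Cref{thm:optimizing_measures}, to extract periodic points of $Y_{(H,G)}^\dagger$. Since $Y_{(H,G)}^\dagger$ is a nonempty sofic shift (by \Cref{lem:X_H_dagger_Y_H_G_dagger}), it has a labeled graph presentation. Any nonempty sofic shift contains a periodic point: a bi-infinite walk in a presenting graph must visit some cycle, and reading its labels around that cycle gives a periodic point. So there is $(x,y) \in Y_{(H,G)}^\dagger$ with $\sigma^k(x,y) = (x,y)$ for some $k$. We then set $p := x_{[1,\infty)}$ and $q := y_{[1,\infty)}$; both are periodic with period $k$ and are walks in $H$ and $G$, respectively.

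For item (1), let $\lambda$ be the uniform measure on the $\sigma$-orbit of $(x,y)$. It is shift-invariant and supported on $Y_{(H,G)}^\dagger$, so the implication (2c)$\Rightarrow$(2a) of \Cref{thm:optimizing_measures} gives
\[
\lim_n \frac1n \sum_{j=1}^n P(q_j,p_j) = V(G,H,P)
\]
for $\lambda$-a.e.\ point, hence for every point of this finite orbit. Alternatively, one can argue directly: since $y_{[1,n]} \in \cR(x_{[1,n]})$, the partial sum equals $M(x_{[1,n]}, s(y_1)\to t(y_n))$, which lies within $\Delta M$ of $\min M(x_{[1,n]})$. The latter is within $C$ of $V_n$ by \Cref{lemma:non_imp_almost_optimal}, because $x_{[1,n]}\in\cW_n^\dagger(H)$.

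For item (2), the claim, read with $p$ in place of $\tilde p$, is that every $\tilde q$ satisfies $\liminf_n \frac1n\sum_{j=1}^n P(\tilde q_j,p_j)\ge V$. This follows because $\sum_{j\le n} P(\tilde q_j,p_j)\ge \min M(p_{[1,n]})\ge V_n - C$ (using \Cref{lemma:non_imp_almost_optimal} once more), and $V_n/n\to V$.

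For item (3), given an arbitrary $\tilde p$, Bob plays a fixed periodic strategy built from the finite-$n$ minimizers. The main obstacle is that the $\tilde q$ must be a single infinite walk rather than one walk for each $n$. Working blockwise, I would take block length $n\ge n_0$ so that $M$ is finite (\Cref{lem:M_p_finite}). On each block $\tilde p_{[jn+1,(j+1)n]}$, Bob chooses $v,w$ with $M(\cdot, v\to w)\le V_n$; this is possible since $\min M(p') \le V_n$ for every $p'$ by \eqref{eq:V_G_H_via_M_p}. To glue consecutive blocks, Bob spends $n_0$ connecting steps, using primitivity of $G$, at cost at most $n_0\|P\|$ each time. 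Concretely, use blocks of length $n$ followed by connectors of length $n_0$; choosing the first block's minimizer to start at the endpoint of the previous connector is handled by letting the connector go from the previous endpoint to the desired start vertex $v$. The average cost is then at most $(V_n + n_0\|P\|)/(n+n_0)$, and it remains to let $n\to\infty$. Because only $\limsup\le V$ is needed, a diagonal argument finishes the proof: increase $n$ along the walk, using block lengths $n_1<n_2<\dots$, each repeated enough times that the earlier blocks become negligible. This gives $\limsup \le V = \lim \frac1n\sum P(q_i,p_i)$.
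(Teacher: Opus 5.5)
Your proposal is correct and follows the paper's route: the soficity of $Y^\dagger_{(H,G)}$ (\Cref{thm:X_H_star_sofic}) gives a periodic point $(x,y)$ whose forward halves serve as $p$ and $q$, and items (1) and (2) then follow from $(x,y)\in Y^\dagger_{(H,G)}$ together with \Cref{lemma:non_imp_almost_optimal}, exactly as you describe. The paper only asserts that all three items follow from the definition of $Y^\dagger_{(H,G)}$; item (3) concerns an arbitrary $\tilde p$ that need not lie in $X_H^\dagger$, so it needs an argument of its own, and your block-and-connector construction with a diagonal choice of block lengths is a sound way to supply the detail the paper omits.
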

\begin{proof}
    By \Cref{thm:X_H_star_sofic}, $Y^\dagger_{(H,G)}$ is a sofic shift.
    It is well-known that any sofic shift admits a periodic point, essentially because any infinite walk in a finite directed graph must contain a cycle. If $(x,y) \in Y^\dagger_{(H,G)}$ is a periodic point, then it follows directly from the definition of $Y^\dagger_{(H,G)}$ that $p= (x_1,\ldots,x_n,\ldots) \in \cW_{+\infty}(H), q= (y_1,\ldots,y_n,\ldots) \in \cW_{+\infty}(G)$ satisfy the assertions in the statement.
\end{proof}

\section{Concluding remarks and open questions}\label{sec:concluding_remarks}

In this article, we dealt only with primitive graphs. Some specific statements are known to fail for general (reducible) graphs: For instance the statement of \Cref{thm:V_G_H_P_via_measures} can fail if we do not assume that $G$ and $H$ are irreducible. 
See \cite{meyerovitch2025nonalternatingmeanpayoffgames} and \cite{young2025adversarialergodicoptimization} for examples of various pathologies exhibited in this context by non-primitive graphs.
However, it is likely that the primitivity assumption can be removed in the statement of \cref{thm:covering_radius_rational} and \Cref{thm:covering_radius_compute} through clever use of the periodic decomposition of irreducible graphs in order to break the problem into several primitive ``pieces," although this is beyond the scope of this article.

Another point worth addressing is the assumption that the payoff function $P$ takes rational values. There is no further difficulty generalizing to rational-valued payoff functions $P:\cE{G} \times \cE{H} \to \mathbb{Q}$, just by  multiplying by a common denominator. However, if we consider payoff functions that take real values, the same questions remain unsolved.

\begin{quest}\label{quest:comput_V_G_H_P}
Does there exist an algorithm that takes as input two primitive graphs $G, H$, and a payoff function $P : \cE{G} \times \cE{H} \to \Q(\sqrt{2})$, and outputs an ``exact value'' for $V(G, H, P)$ in finite time? Is $V(G,H,P) \in \Q(\sqrt{2})$?
\end{quest}

\begin{quest}
Let $G,H$ be primitive graphs, and $P:\cE{G} \times \cE{H}\to \R$ an arbitrary payoff function. Is the shift space $Y^\dagger_{(H,G)}$ defined by \eqref{eq:X_G_H_P_hat_def} a sofic shift? Does it always contain a periodic point?
\end{quest}

\bibliographystyle{amsplain}
\bibliography{lib}
\end{document}